\documentclass{amsart}
\usepackage{amsmath,amsthm,amsfonts,amssymb,latexsym,mathrsfs,graphicx}

\usepackage{hyperref}

\usepackage{enumerate}
\usepackage[shortlabels]{enumitem}
\usepackage{color}
\usepackage {tikz}
\usetikzlibrary {positioning}
\definecolor {processred}{cmyk}{0,0.96,0,0}

\headheight=7pt
\textheight=574pt
\textwidth=432pt
\topmargin=14pt
\oddsidemargin=18pt
\evensidemargin=18pt

\headheight=5pt \textheight=600pt \textwidth=450pt \topmargin=14pt 
\oddsidemargin=11pt\evensidemargin=14pt

\newtheorem{theorem}{Theorem}[section]
\newtheorem{corollary}[theorem]{Corollary}
\newtheorem{lemma}[theorem]{Lemma}
\newtheorem{remark}[theorem]{Remark}

\theoremstyle{definition}

\newtheorem{example}[theorem]{Example}
\newtheorem{nota}[theorem]{Notation}

\tikzstyle{vertex}=[circle, draw, inner sep=0pt, minimum size=6pt] 
\newcommand{\vertex}{\node[vertex]}

\makeatletter
\newcommand*{\rom}[1]{\expandafter\@slowromancap\romannumeral #1@}
\makeatother
\linespread{1.5}
\begin{document}
	
\title{regular sets in Cayley  sum graphs}

\author[F. Seiedali]{Fateme sadat seiedali$^1$}
\author[B. Khosravi]{Behrooz Khosravi$^1$}
\author[Z. Akhlaghi]{Zeinab Akhlaghi$^{1,2}$}

\address{$^{1}$ Faculty of Mathematics and Computer Science, Amirkabir University of Technology (Tehran Polytechnic), 15914 Tehran, Iran.}
\address{$^{2}$ School of Mathematics,
	Institute for Research in Fundamental Science (IPM)
	P.O. Box:19395-5746, Tehran, Iran. }
\email{\newline \text{(F. Seiedali) }fseiedali@gmail.com \newline \text{(Z. Akhlaghi) }z\_akhlaghi@aut.ac.ir  \newline \text{(B. Khosravi) }khosravibbb@yahoo.com}


\begin{abstract}
	
A subset $C$ of the vertex set of a graph $\Gamma$ is said to be $(\alpha,\beta)$-regular if  $C$ induces an $\alpha$-regular subgraph and every vertex outside $C$ is adjacent to exactly $\beta$ vertices in $C$. In particular, if $C$ is an $(\alpha,\beta)$-regular set in some Cayley sum  graph of a finite group $G$ with connection set $S$, then $C$ is called an $(\alpha,\beta)$-regular set of $G$ and a $(0,1)$-regular set is called a perfect code of $G$. By Sq$(G)$ and NSq$(G)$ we mean the set of all square elements and non-square elements of $G$. As one of the main results in this note, we show that a subgroup $H$ of a finite abelian group $G$ is an $(\alpha,\beta)$-regular set of $G$, for each $0\leq \alpha \leq |$NSq$(G)\cap H|$ and $0\leq \beta \leq \mathcal{L}(H)$, where $\mathcal{L}(H)=|H|$, if Sq$(G) \subseteq  H$ and $\mathcal{L}(H)=|$NSq$(G)\cap H|$, otherwise. As a consequence of our result we give a very brief proof for the main results in \cite{mama, ma}. Also, we consider the dihedral group $G=D_{2n} $ and for each subgroup $H $ of $G$, by giving an  appropriate  connection set $S$, we determine each  possibility for $(\alpha, \beta)$, where $H$ is an  $(\alpha,\beta)$-regular set of $G$.

\end{abstract}
\keywords{Perfect code · Subgroup perfect code · Cayley sum graph · Finite group. Regular set}
\subjclass[2000]{05C25 , 05C69 , 94B25}
\thanks{The third  author is supported by a Grant from IPM (no. 1403200013)}


\maketitle

\linespread{1.5}

\section{Introduction}

Let   $C$ be a subset of   ${\mathbb F}_q^n$,  where ${\mathbb F}_q$ is the finite field of order $q$.  Then we call $C$ a code of lenght $n$ over   ${\mathbb F}_q$ and the elements of $C$ are called codwords.    For two vectors $x, y\in {\mathbb F}_q^n$,   their
Hamming distance
$d(x, y) $ is the number of
coordinates in which
they differ and  for $x\in {\mathbb F}_q^n$,  we define $d(x,C)=\min\{d(x,y)\ :\ y\in C\}$.   A code $ C$ over  ${\mathbb F}_q$ is called $q$-ary $\rho$-covering code  if for every vector $ y$ there is a codeword 
$x\in C$  such that the Hamming distance  $d(x,y)\leq \rho$.  The covering radius of a code $C$ is the smallest $\rho$ such that $C$ is $\rho$-covering. Let $x\in {\mathbb F}_q^n$.  Denote by $B_{x,i}$, the number of codewords of distance $i$ from $x$.   A code $C$ with covering radius $\rho$ is  called $t$-regular ($0\leq t\leq \rho$) if for all $i=0,\dots,\rho$,  $B_{x,i}$ depends only on $i$ and $d(x,C)$, for all $x$ such that  $d(x, C)\leq t$ (see for instance  \cite{del, goe}).  A code $C$ is completely regular if it is $\rho$-regular (see \cite{del}).   We refer to  \cite{davar}, for a comprehensive survey on completely regular codes. 
In $1$-regular code, we face two important parameters $\alpha=B_{x,1}$, for $x\in C$ and $\beta=B_{x, 1}$, for $x\in {\mathbb F}_q^n\setminus C$ (we have two other  trivial parametrs  $B_{x,0}=1$ for $x\in C$ and $B_{x,0}=0$ for $x\not\in C$, which can be ignored).  It means that for  $x\in C$, there exist exactly  $\alpha$ elements in $C$  of  distance 1  from $x$ and for all   $x\in {\mathbb F}_q^n\setminus C$, there exist exactly $\beta$ elements in $C$ of distance $1$ from $x$.  In particular, a $1$-regular code with $B_{x,1}=0$, for $x\in C$ and $B_{x, 1}=0$, for $x\in {\mathbb F}_q^n\setminus C$, is called a perfect code. 
 The
combinatorial properties of completely regular codes allow to establish different relations with other combinatorial structures such as distance-regular graphs, association schemes and designs. To extend the idea of regular code in other combinatorial structure, many authors pay attention
 perfect codes and regular sets in graphs. 

In this paper, all groups are finite and all graphs are simple. Let $\Gamma=({\bf V}(\Gamma), {\bf E}(\Gamma))$ be a simple graph, where ${\bf V}(\Gamma)$ and ${\bf E}(\Gamma)$ are the sets of its vertices and the set of its edges, respectively. For non-negative integers $\alpha,\beta$, a subset $C$ of ${\bf V}(\Gamma)$ is called an {\it$(\alpha,\beta) $-regular set} in $\Gamma$, if every vertex    of $C$ is adjacent to exactly $\alpha$ vertices of $C$ and every vertex of  ${\bf V}(\Gamma)\setminus C$ is a neighbor to exactly $\beta$ vertices of $C$ (see \cite{R1}). A $(0,1)$-regular set is called a {\it perfect code}.  Clearly,   the definition of regular set in a graph arises from the definition of the  $1$-regular code, by replacing the hamming distance of two vectors by the distance of the vertices of the graph, as the metric. 

For a graph $\Gamma=({\bf V}(\Gamma), {\bf E}(\Gamma))$, a partition of ${\bf V}(\Gamma)$ with cells $\mathcal{V}=\{ V_1,\dots,V_k\}$ is called an {\it equitable $k$-partition}, when each cell induces a regular subgraph and  any vertex of $V_i$ is adjacent to $b_{ij}$ vertices of $V_j$, \cite[Section 9.3]{godsil}. The quotient matrix of the partition $\mathcal{V}$ is defined as $M=(b_{ij})$. We note that, if the row sums of a $k \times k$ matrix $A$ is equal to a fixed number, say $r$, then $r$ is an eigenvalue of $A$ \cite[Theorem 9.3.3]{godsil}. Therefore, if $\Gamma$ is a connected $r$-regular graph, then $r$ is a simple eigenvalue of the quotient matrix $M$ of $\Gamma$.  
If $\Gamma$ is a $r$-regular graph, $\mathcal{V}$ an equitable partition of $V(\Gamma)$ and $M$ is its quotient matrix such that all eigenvalues of $M$ except $r$ are equal to $\mu$, then $\mathcal{V}$ is called $\mu$-equitable.  
We note that a non-trivial coarsening of a $\mu$-equitable partition is $\mu$-equitable (see \cite[ Corollary 2.3]{bcg}). So, the study of equitable partition with exactly two parts is important.

Moreover, an $(\alpha,\beta)$-regular set in a $r$-regular graph
$\Gamma$ is exactly a completely regular code $C$ in $\Gamma$ (see, for example, \cite{n}) such that the
corresponding distance partition has exactly two parts, namely $\{C, {\bf V}(\Gamma) \setminus  C\}$. 
 An equitable $2$-partition  is also called perfect 2-coloring \cite{2coloring}. The notion of perfect coloring is a common research subject in coding theory \cite{hamming2coloring1, hamming2coloring2}. 
 
 Let $G$ be a group with identity element $e$ and  $S$ be an inverse-closed subset  of $G$,   i.e.  $S^{-1} = \{s^{-1} : s \in S \} = S$,  where $e \notin S$. The {\it Cayley graph  } Cay$(G, S)$ of $G$ with respect to the connection set $S$ is defined to be the graph with vertex set $G$ such that two elements $x$, $y$ are adjacent if $yx^{-1} \in S$, see \cite{ taeri, DeVos}.
 
An element $ x $ of $ G $ is called a {\it square} if $ x = y^{2}$, for some element $ y \in G $. A subset of $ G $ whose elements are not square, is called {\it square-free}. A subset $ S $ of $ G $ is called a {\it normal} subset if for every $g \in G $, $ g^{-1} S g = \{ g^{ -1 }sg : s \in S \} = S$. 
Let $S$ be a normal square-free subset of $G$. The {\it Cayley sum graph}  CayS$(G,S)$ of $ G $ with respect to the connection set S is a simple graph with vertex set $ G $ and two vertices $x$ and $y$ are adjacent if $ x y \in S $. Since $ S $ is a normal square-free subset of $ G $, $x y \in S$ if and only if $ yx \in S $, hence CayS$(G, S) $ is an undirected graph without loops. It is easy to see that CayS$(G, S)$ is $|S|$-regular. The Cayley sum graph is first defined for abelian groups (see \cite{chung})  and then it is generalized to any arbitrary group in \cite{ma}.  
 The perfect code of  the cayley graphs of finite groups are widely  studied by some authors (see for instance \cite{0,paper, on}).   We note that in \cite{regularset}, a perfect code of a Cayley graph of $G$, briefly is called a perfect code of $G$. In \cite{mama, ma}, a subset $C$ of $G$ is called a {\it perfect code} of G if there exists a Cayley sum graph of $G$ which admits $C$ as a perfect code and in particular, if a subgroup $H$ of $G$ is a perfect code of $G$, then $H$ is called a {\it subgroup perfect code} of $G$. In the sequel of this paper, by a perfect code of $G$ we mean a perfect code of a Cayley sum graph of $G$ with respect to a subset $S$.

In \cite{mama}, Ma et al. provided necessary and sufficient conditions for a non-trivial subgroup of an abelian group to be a subgroup perfect code. They determine whether a subgroup of an abelian group is a perfect code according to its Sylow 2-subgroup. Moreover, they specified all subgroup perfect codes of a cyclic group, a dihedral group, and a generalized quaternion group. Also in \cite{ma}, they gave a shortened proof for classifying all perfect codes of abelian groups.

In  \cite{regularset},  it is proved that  a normal subgroup $H$ is a perfect code of a  Cayley graph of $G$ if and only if it is an $(\alpha,\beta)$-regular set in  a Cayley graph of $G$,  for each $0 \leq \alpha \leq |H|-1$ and $0 \leq \beta \leq |H|$, where gcd$(2, |H|-1)$ divides $\alpha$.

In \cite{note}, for a normal subgroup $H$ of $G$, it is proved that for   $0 \leq \alpha \leq |H|-1$  and $0 \leq \gamma\leq |H|/2$ such that gcd$(2, |H|-1)$ divides $\alpha$, $H$ is an $(\alpha,2\gamma)$-regular set in a Cayley  graph of $G$. 

 It is natural to ask about regular sets in  other graphs, specially  Cayley sum graphs. In the sequel of this paper,  by an  $(\alpha, \beta)$-regular set of $G$ we mean an $(\alpha, \beta)$-regular set in a Cayley sum graph of  $G$ and we study the following problem for   abelian groups and  dihedral groups  $G=D_{2n} $, for $n\geq 3$:

{\bf Problem}: Describe all  subgroups  $H$ of $G$ and parameters $\alpha$ and $\beta$  such that $H$ is an $(\alpha,\beta)$-regular  set of $G$. 

 To answer to  the above problem, for each subgroup $H$ of $G$  and $(\alpha, \beta)$ such that $H$ is an $(\alpha,\beta)$-regular set of $G$, we show that there exists  a connection  set  $S$ of  size $\alpha +\beta((|G|/|H|)-1)$ (Corollary \ref{gir})  such that $H$ is an  $(\alpha,\beta)$-regular set in   ${\rm CayS}(G,S)$.  We note that this graph has valency $|S|$ and it is worth mentioning  that  the connection set $S$ is not unique necessarily,  and sometimes the number of them can be  large (for example in the abelian groups). 

Let $G$ be an abelian group and $H$ a subgroup of $G$.  In the second section, we  determine all the possibilities for  $ \alpha$ and $\beta$, such that $H$ is  an $(\alpha,\beta)$-regular  set of  $G$.  As a consequence we give a shorter proof for the main results in \cite{ mama, ma}.

 In the third  section, we study  this problem  for  each  subgroup $H$ of the dihedral group $G$. In the proofs of the theorems in this section,  for each subgroup $H$ of $G$  and $(\alpha, \beta)$ such that $H$ is an $(\alpha,\beta)$-regular set of $G$, we introduce   a connection  set  $S$. 


Obviously, $G$ is a perfect code in the empty Cayley sum graph  CayS$(G, \emptyset ) $. As a result, any group is a subgroup perfect code of itself.  Similarly,  any  subgroup $H$ is  a $(0,0)$-regular set in ${\rm CayS}(G, \emptyset)$.  As this case is trivial,  in the rest of the paper  we do not consider the case when   $(\alpha, \beta)=(0,0)$.   It is easy to see that every non-trivial element of $G$ is non-square if and only if $G$ is an elementary abelian $2$-group. So a Cayley sum graph CayS$(G, S) $ is complete if and only if G is an elementary abelian $2$-group  and $ S = G \setminus \{ e \} $. This also means that the trivial subgroup $ \{e \} $ of $G$ is a subgroup perfect code if and only if $G$ is an elementary abelian $2$-group.

 Let $G$ be a finite group. Throughout  the paper, the  identity
 element  of $G$ is denoted by $e$.  The  number of elements of $G$ is denoted by $|G|$ and is called the  {\it order} of  $G$. The order of an element $g\in G$, denoted by $o(g)$,   is the smallest natural number $n$ such that $g^n=e$.  Let $H$ be a subgroup of $G$ and $a\in H$, then  $Ha=\{ha \ :\ h\in H\}$ is called  a  right coset of $H$ in $G$. The {\it index} of a subgroup $H$ in a group $G$ is the number of  distinct  right cosets of $H$ in $G$, which is denoted by  $[G:H]$.  A {\it right transversal} of $H$ in $G$ is a subset of G which contains exactly one element from each  right coset of H.  If the order of $H$ is the largest odd divisor of the order of $G$, then $H$ is called a {\it Hall $2^{\prime} $-subgroup} of $G$. 
 Let $|G|=p^{n}m$ where $p\nmid m$, then a subgroup  of $G$ of order $p^n$ is called  a {\it Sylow $p$-subgroup} of $G$.  
 An {\it elementary abelian} $p${\it-group} is an abelian group in which every non-trivial element has order $p$, where $p$ is a prime. The {\it direct product} of two groups $G$ and $H$ is  $G \times H=\{(g,h)\ :\  g\in G, \  h\in H\}$,  with the  group operation given by $(g_1,h_1). (g_2 , h_2)=(g_1 g_2 , h_1 h_2)$,  where the coordinate-wise operation are the operations in $G$ and $H$. Two elements $a$ and $b$ of $G$ are {\it conjugate} if there is an element $g \in G$ such that $b=gag^{-1}$. The {\it conjugacy class} of $a\in G$ is  the set of  all conjugates of $a$ in $G$, which is  denote by $a^G$. Let $X\subseteq G$ be a non-empty  subset of $G$, then  the smallest subgroup of $G$, containing $X$ is denoted by $\langle X\rangle$.  
 
 By Sq$(G)$ and NSq$(G)$ we mean the set of all square and non-square elements of $G$.   If  $G$ is an abelian group, then we  denote by ${\bf E}_2(G)$,  the greatest direct factor of $G$ isomorphic to an elementary abelian $2$-group. By $C_n$ we mean  the cyclic group of order $n$.

 \smallskip

\linespread{1.5}
\section{REGULAR SETS IN ABELIAN GROUPS }

\begin{lemma}\label{2.1}
	\cite[Lemma 2.1]{ma} Let $ S $  and $ H $ be a normal square-free subset and a subgroup of a finite group $G$, respectively.
	The following are equivalent:
	\begin{enumerate}
	
	\item H is a perfect code of  {\rm CayS}$(G, S) $.
	
\item $ S \cup \{ e \} $ is a right transversal of $ H $ in $
	G$.
	
	\item $ [G : H] = \left | S \right | + 1 $ and $ \left( S \cup SS^{-1} \right)\cap H = \{ e \}$. 
	\end{enumerate}
\end{lemma}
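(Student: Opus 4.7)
The plan is to establish the equivalences by translating the combinatorial perfect-code condition into coset-theoretic language, exploiting the fact that ${\rm CayS}(G,S)$ is undirected. Indeed, the identity $y(xy)y^{-1}=yx$ together with normality of $S$ yields $xy\in S \iff yx\in S$, so the neighbours of any $g\in G$ lying in $H$ can be counted symmetrically as either $\{h\in H:gh\in S\}$ or $\{h\in H:hg\in S\}$. I will also use throughout that $e\notin S$, a consequence of the square-freeness of $S$ and $e=e^{2}$.

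For $(1)\Leftrightarrow(2)$, the edgeless condition on $H$ amounts to $h_{1}h_{2}\notin S$ for all $h_{1},h_{2}\in H$, which collapses to $S\cap H=\emptyset$ since $HH=H$; and for $g\notin H$ the neighbours of $g$ in $H$ are counted by $|\{h\in H:hg\in S\}|=|S\cap Hg|$. Together with $e\in H$, these conditions say exactly that the coset $H$ contributes $e$ and every other right coset contributes one element of $S$ to $S\cup\{e\}$, i.e., that $S\cup\{e\}$ is a right transversal of $H$ in $G$.

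For $(2)\Rightarrow(3)$, counting sizes gives $|S|+1=[G:H]$, while for $s_{1},s_{2}\in S$ the condition $s_{1}s_{2}^{-1}\in H$ places $s_{1}$ and $s_{2}$ in the same right coset and hence, by the transversal property, forces $s_{1}=s_{2}$; combined with $S\cap H=\emptyset$ this yields $(S\cup SS^{-1})\cap H=\{e\}$. The reverse implication $(3)\Rightarrow(2)$ runs the same argument backwards: the condition $(S\cup SS^{-1})\cap H=\{e\}$ forces the $|S|+1$ elements of $S\cup\{e\}$ into pairwise distinct right cosets of $H$, and $[G:H]=|S|+1$ then guarantees a transversal.

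The main subtlety I anticipate is the left/right coset bookkeeping: the defining adjacency $gh\in S$ naturally produces a left-coset intersection $|S\cap gH|$, whereas (2) and (3) are framed via right cosets. The normality of $S$ is precisely what lets us reformulate adjacency as $hg\in S$ and count via $|S\cap Hg|$; once this bridge is in place, the remainder of the argument is elementary counting with no deeper obstacle.
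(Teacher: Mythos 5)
The paper does not prove this lemma but imports it verbatim from \cite[Lemma 2.1]{ma}; your argument is correct and is essentially the standard proof of that result, translating the $(0,1)$-regularity of $H$ into $S\cap H=\emptyset$ together with $|S\cap Hg|=1$ for $g\notin H$ via the normality of $S$, and then reading off the transversal and counting conditions. The only caveat, inherited from the cited statement rather than from your proof, is the degenerate case $S=\emptyset$ (i.e.\ $H=G$), where (1) and (2) hold but $(S\cup SS^{-1})\cap H=\emptyset\neq\{e\}$; the paper sidesteps this by treating the empty connection set separately.
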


\begin{lemma}\cite[Theorem 3.1]{ma}\label{3.1mama}
Let $G$ be an abelian group with non-trivial Sylow $2$-subgroup $P$, and let $H$ be a non-trivial subgroup of $G$. Then $H$ is a subgroup perfect code of $G$ if and only if one of the following occurs:
	\begin{enumerate}
\item $P \subseteq  H$;
\item $[G : H]=|P|$ and $P$ is elementary abelian;
\item $P\cap H$ is a non-trivial subgroup perfect code of $P$,  and either $[G : H]$ is a power of $2$ or $ P\cap H$ has a non-square element in $G$.
	\end{enumerate}
\end{lemma}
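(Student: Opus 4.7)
The plan is to apply Lemma~\ref{2.1} systematically. Since $G$ is abelian, decompose $G = P \times K$ where $K$ is the Hall $2'$-subgroup, and correspondingly write $H = H_P \times H_K$ with $H_P = H \cap P$ and $H_K = H \cap K$. Because $|K|$ is odd, every element of $K$ is a square in $G$, so $\mathrm{NSq}(G) = \mathrm{NSq}(P) \times K$. By Lemma~\ref{2.1}, $H$ is a subgroup perfect code of $G$ if and only if there is an inverse-closed right transversal $\{e\} \cup S$ of $H$ in $G$ with $S \subseteq \mathrm{NSq}(G)$; the task is to decide when such an $S$ exists.

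For the sufficiency direction, I would construct $S$ case by case. In case (1), $P \subseteq H$, so $G/H$ has odd order and its non-trivial cosets split into disjoint inverse-pairs; for each pair I pick a representative $(p,k)$ with $p \in \mathrm{NSq}(P)$ (possible because $P \neq \{e\}$ forces $\mathrm{NSq}(P)$ to be non-empty) and assign $(p^{-1},k^{-1})$ to the paired coset. In case (2), the hypothesis forces $H = K$ with $P$ elementary abelian, so $S = P \setminus \{e\}$ works: every non-identity element of $P$ is an involution and $\mathrm{NSq}(P) = P \setminus \{e\}$. In case (3), I would combine an inverse-closed transversal $\{e\} \cup S_P$ of $H_P$ in $P$ with $S_P \subseteq \mathrm{NSq}(P)$ (furnished by the perfect code hypothesis on $H_P \le P$) and a symmetric transversal $\{e\} \cup S_K$ of $H_K$ in $K$ (existing because $|K/H_K|$ is odd). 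The non-trivial cosets of $H$ in $G$ split into three types by whether their $P$- or $K$-component is trivial; the type with trivial $P$-component arises only when $H_K \neq K$ and requires a representative with $P$-component in $H_P \cap \mathrm{NSq}(P)$---precisely what the second clause of case (3) furnishes. Pairing $(h_0,k)$ with $(h_0^{-1},k^{-1})$ across inverse-paired cosets then produces an inverse-closed $S$.

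For the necessity direction, given $S$ as above, I argue by the size of $H_P$. If $H_P = P$, case (1) is immediate. If $H_P = \{e\}$, then the representative of any coset $\{e\} \times H_K k$ with $k \notin H_K$ would need to have $P$-component in $\mathrm{NSq}(P)$, but its $P$-component is forced to be $e \notin \mathrm{NSq}(P)$; so $H_K = K$, giving $H = K$ and $[G:H] = |P|$. The representatives of the remaining $|P|-1$ non-trivial cosets then cover $P \setminus \{e\}$ and must all lie in $\mathrm{NSq}(P)$, forcing $P$ to be elementary abelian; this is case (2). Finally, if $H_P$ is a non-trivial proper subgroup of $P$, restricting $S$ to those representatives with $K$-component in $H_K$ yields an inverse-closed transversal of $H_P$ in $P$ whose non-identity part lies in $\mathrm{NSq}(P)$, so $H_P$ is a perfect code of $P$; and if $[G:H]$ is not a power of $2$, then $H_K \neq K$ and the representative of the coset $H \cdot (e,k_0)$ for any $k_0 \notin H_K$ supplies an element of $H_P \cap \mathrm{NSq}(P)$, completing case (3).

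The most delicate step is the sufficiency construction in case (3) when $H_K \neq K$: one must pair cosets with their inverse cosets and pick matched representatives consistently so that $S$ comes out inverse-closed. The key enabler is that $|K/H_K|$ is odd, so every non-trivial $H_K$-coset in $K$ pairs with a distinct inverse coset and no self-inverse obstruction arises on the $K$-side; all self-inverse phenomena are confined to the $P$-direction, where they are already handled by the inverse-closure of $S_P$.
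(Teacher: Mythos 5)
Your argument is essentially correct, but it takes a genuinely different route from the paper. The paper never reproves this statement directly: it quotes it as \cite[Theorem 3.1]{ma} and then observes that it drops out of the clean criterion ``$H$ is a perfect code of $G$ iff ${\rm Sq}(G)\subseteq H$ or ${\rm NSq}(G)\cap H\neq\emptyset$'' (the Corollary following Theorem \ref{first}), which in turn rests on the computation of $\mathcal{L}(H)$ in Theorem \ref{main}. You instead reprove the lemma from scratch out of Lemma \ref{2.1}, via the decomposition $G=P\times K$, $H=H_P\times H_K$, and the identity ${\rm NSq}(G)={\rm NSq}(P)\times K$. Your case analysis is sound: the key observations --- that a coset with trivial $P$-component consists entirely of squares, that projecting the transversal onto $P$ along the cosets with $K$-component in $H_K$ yields a square-free transversal of $H_P$ in $P$, and that $H_K\neq K$ forces $[G:H]$ to have an odd factor and hence activates the second disjunct of case (3) --- are exactly what is needed, and the constructions in the sufficiency direction all produce valid connection sets. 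What the paper's route buys is uniformity (one criterion handles all $(\alpha,\beta)$, not just perfect codes) and brevity; what your route buys is independence from the $\mathcal{L}(H)$ machinery and an explicit transversal in every case.

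One caveat you should repair: you assert at the outset that $H$ is a subgroup perfect code iff there is an \emph{inverse-closed} square-free transversal $\{e\}\cup S$. That is the Cayley-graph condition; for Cayley sum graphs Lemma \ref{2.1} requires $S$ to be normal (automatic here, as $G$ is abelian) and square-free, but not inverse-closed, and the ``only if'' direction of your reformulation is unjustified as stated. This does not break the proof --- your sufficiency constructions happen to produce inverse-closed sets, which is harmless, and your necessity argument never actually uses inverse-closedness (the projected transversal of $H_P$ in $P$ only needs to be square-free) --- but the phrase ``inverse-closed'' should be deleted throughout, and in the necessity direction you should start from an arbitrary square-free transversal.
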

\begin{lemma} \cite[Lemma 3.1]{mama} \label{3.1ma}
Let $A=C_{2^{m_1}}\times C_{2^{m_2}}\times \cdots\times C_{2^{m_k}}\times A_{2'}$ be an abelian group and $A_{2'}$ be the Hall $2'$-subgroup of $A$. Suppose  $H$ is a subgroup of $A$. Then $H$ is a subgroup perfect code of $A$ if and only if either $H$ is a subgroup isomorphic to 
$$C_{2^{m_1-1}}\times C_{2^{m_2-1}}\times \cdots \times C_{2^{m_k-1}}\times A_{2'}$$
or $H$ has a non-square element. 
\end{lemma}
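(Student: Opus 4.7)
The plan is to deduce this lemma as an immediate specialization of the section's main theorem (which gives the full parameter range for $(\alpha,\beta)$-regular subgroups of a finite abelian group) to the case $(\alpha,\beta)=(0,1)$, i.e.\ the perfect-code condition. That theorem guarantees $H$ is $(0,1)$-regular precisely when $0\leq 1\leq \mathcal{L}(H)$. Unpacking the piecewise definition of $\mathcal{L}$: if $\mathrm{Sq}(A)\subseteq H$ then $\mathcal{L}(H)=|H|\geq 1$ is automatic, while if $\mathrm{Sq}(A)\not\subseteq H$ then $\mathcal{L}(H)=|\mathrm{NSq}(A)\cap H|\geq 1$ just says $H$ has a non-square. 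Collapsing both cases into one disjunction, I get the provisional characterization: $H$ is a perfect code of $A$ if and only if $\mathrm{Sq}(A)\subseteq H$ or $H$ contains a non-square element.

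To match the isomorphism-type statement in the lemma I next identify $\mathrm{Sq}(A)$ explicitly. Because $A_{2'}$ has odd order, squaring is a bijection on $A_{2'}$; and the squares in $C_{2^{m_i}}$ form its unique subgroup of index $2$, isomorphic to $C_{2^{m_i-1}}$. Multiplying through the direct product decomposition yields
\[
\mathrm{Sq}(A)\;\cong\; C_{2^{m_1-1}}\times C_{2^{m_2-1}}\times\cdots\times C_{2^{m_k-1}}\times A_{2'},
\]
which is exactly the isomorphism type $T$ appearing in the statement; in particular $[A:\mathrm{Sq}(A)]=2^k$.

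It remains to show the equivalence
\[
[\mathrm{Sq}(A)\subseteq H\ \text{or}\ H\cap \mathrm{NSq}(A)\neq\emptyset]\iff[H\cong T\ \text{or}\ H\cap \mathrm{NSq}(A)\neq\emptyset].
\]
The forward direction is the easy one: when $\mathrm{Sq}(A)\subseteq H$ either $H=\mathrm{Sq}(A)$ (hence $H\cong T$) or $H\supsetneq \mathrm{Sq}(A)$, in which case any element of $H\setminus \mathrm{Sq}(A)$ is a non-square in $H$. For the converse, suppose $H\cong T$ and $H$ contains no non-square; then $H\subseteq \mathrm{Sq}(A)$, and equality of orders $|H|=|\mathrm{Sq}(A)|$ forces $H=\mathrm{Sq}(A)$. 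The only genuine content beyond routine rewriting is this final cardinality argument, which bridges the weaker hypothesis "$H$ is isomorphic to $T$" with the stronger containment "$\mathrm{Sq}(A)\subseteq H$"; that is the step I expect to be the only real (and still minor) obstacle.
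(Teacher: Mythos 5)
Your proposal is correct and follows essentially the same route the paper intends: it deduces the lemma from Theorem \ref{first} and Theorem \ref{main} via the characterization ``$\mathrm{Sq}(A)\subseteq H$ or $H$ contains a non-square,'' exactly as in the paper's Corollary, and then translates to the isomorphism-type statement by identifying $\mathrm{Sq}(A)$ with $C_{2^{m_1-1}}\times\cdots\times C_{2^{m_k-1}}\times A_{2'}$. Your cardinality argument $|H|=|\mathrm{Sq}(A)|=|A|/2^k$ correctly supplies the one translation step the paper leaves implicit.
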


In  the following Lemmas we discuss the regular subsets of a Cayley sum graph of $G$, which is the Cayley sum graph version of \cite[Lemma 2.7]{goh}. 
\begin{lemma}\label{1}
Let $G$ be a group and $H$ be a subgroup of $G$. Then $H$ is a $(0,\beta)$-regular set of $G$ if and only if $G$ has $\beta$ pairwise disjoint subsets $T_i$, $1\leq i\leq \beta$, such that for each $i$, $T_i\cup \{e\}$ is a  right transversal of $H$ in $G$ and $\bigcup\limits_{i=1}^\beta T_i$ is a normal  square-free subset of $G$.  In particular,  if $H$ is an  $(\alpha ,\beta)$-regular set in {\rm CayS}$(G,S)$, then $|S\cap Hx|=\beta$,  for  each $x\in G\setminus H$. 
\end{lemma}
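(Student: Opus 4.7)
The plan is to transcribe $(0,\beta)$-regularity of $H$ in ${\rm CayS}(G,S)$ as two purely set-theoretic conditions on $S$, namely $S\cap H=\emptyset$ and $|S\cap Hx|=\beta$ for every non-identity right coset $Hx$, and then recognise that these two conditions are exactly the requirement that $S$ split as the disjoint union of $\beta$ punctured right transversals $T_i$ of $H$.

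The dictionary between the two descriptions is furnished by the ``In particular'' clause, so I would prove this first. Fix $x\in G\setminus H$; for $h\in H$, adjacency of $h$ and $x$ in ${\rm CayS}(G,S)$ means $xh\in S$, equivalently $hx\in S$ by the normality of $S$ (this equivalence is recorded in the paragraph defining ${\rm CayS}(G,S)$). The map $h\mapsto hx$ is a bijection $\{h\in H:hx\in S\}\to S\cap Hx$, so the number of neighbours of $x$ in $H$ equals $|S\cap Hx|$. In an $(\alpha,\beta)$-regular set this number is $\beta$, giving the clause immediately.

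For the forward direction of the main equivalence, assume $H$ is a $(0,\beta)$-regular set of $G$ via some normal square-free $S$. The $0$-regularity of the induced subgraph on $H$ forces $S\cap H=\emptyset$, since any $s\in S\cap H$ would produce the edge joining $s$ and $e$ inside $H$. The clause just proved supplies $|S\cap Hx|=\beta$ for every non-identity right coset $Hx$. Choose representatives $x_1,\dots,x_{[G:H]-1}$ of these cosets, label $S\cap Hx_j=\{s^{(j)}_1,\dots,s^{(j)}_\beta\}$ arbitrarily, and set $T_i=\{s^{(j)}_i:1\le j\le [G:H]-1\}$. By construction the $T_i$ are pairwise disjoint, their union equals $S$ (hence is normal and square-free), and each $T_i\cup\{e\}$ meets every right coset of $H$ in exactly one point, so it is a right transversal.

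The converse is a direct verification. Given $T_1,\dots,T_\beta$ satisfying the hypotheses, put $S=\bigcup_i T_i$, which is normal and square-free by assumption. Since $e\in H$ is the $H$-representative inside the transversal $T_i\cup\{e\}$, we have $T_i\subseteq G\setminus H$, hence $S\cap H=\emptyset$ and $H$ induces an edgeless subgraph; for each $x\in G\setminus H$ every $T_i$ contributes precisely one element to $S\cap Hx$, so $|S\cap Hx|=\beta$, and by the dictionary $x$ has exactly $\beta$ neighbours in $H$. No step is technically delicate; the only point to keep in mind is the consistent use of $xy\in S\Leftrightarrow yx\in S$ to pass between left- and right-coset intersection counts, so I do not anticipate any real obstacle.
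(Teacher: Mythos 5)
Your proof is correct and follows essentially the same route as the paper: translate adjacency of $h\in H$ and $x\notin H$ into the membership $hx\in S$ (using $xy\in S\Leftrightarrow yx\in S$), so that $(0,\beta)$-regularity becomes $S\cap H=\emptyset$ together with $|S\cap Hx|=\beta$ for all $x\in G\setminus H$, and then split $S$ coset-by-coset into the $\beta$ punctured transversals $T_i$. Your version is, if anything, slightly more explicit than the paper's in constructing the $T_i$ and in justifying $S\cap H=\emptyset$, but there is no substantive difference.
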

\begin{proof}
Suppose that
$T_{1},...,T_{\beta}$
are pairwise disjoint subsets of NSq$(G)$, such that for  each $i$, where $1\leq i\leq \beta$, $T_i\cup \{e\}$ is a right transversal of $H$ in $G$ and let  $S=\bigcup\limits_{i=1}^\beta T_i$ is a normal subset of $G$. Let $x \in G \setminus H$. Then for each $i\in \{1,\dots, \beta\}$ we have  $|Hx\cap T_i|$=1,  and so  there exists $h_{i} \in H$ such that $h_{i}x \in T_{i} $.  Thus,   $x$ is adjacent to at least $\beta$ elements, $h_1,\dots , h_{\beta}\in H$ in $\rm{CayS}(G,S)$. If  there exists some $h\in H\setminus \{h_1,\dots , h_{\beta}\}$ such that $hx\in S$, then there exists $ i\in \{1,\dots, \beta\}$,   such that $hx\in T_i$. Thus, $Hx\cap T_i$ contains more than one element, which is a contradiction, as $T_i$ is a right transversal of $H$ in $G$.  As $S\cap H=\emptyset$,  we get that   $H$ is a $(0,\beta)$-regular set in CayS$(G,S)$. 

 Conversely, assume  that $H$ is a  $(0,\beta)$-regular set in CayS$(G,S)$, for some normal square-free subset $S\subseteq G$.  It means that for every $x\in G\setminus H$ there exist exactly $\beta$ distinct elements $h_1, h_2, ..., h_\beta$ in $H$ such that $h_ix\in S$, for $i=1,\dots , \beta$. Hence, $|S\cap Hx|=\beta$,  for all $x\in G\setminus H$, which means that $S$ is the union of $\beta$ pairwise disjoint subsets $T_i$, where $1\leq i\leq \beta$, such that for each $i$, $T_i\cup \{e\}$ is a right transversal of $H$ in $G$.
 \end{proof}

\begin{corollary}\label{gir}
	Let $H$ be a subgroup of $G$.  If $H$ is  an $(\alpha,\beta)$-regular set of $G$, then there is a normal square-free subset $S$ (the connection  set of the corresponding Cayley sum graph) such that $|S\cap H|=\alpha$ and $|S\cap (G\setminus H)|=\beta([G:H]-1)$.  Obviously, $\beta \leq |H|$ and  $|S|=\alpha+ \beta([G:H]-1) $. 

Moreover, there exist $\beta$  pairwise disjoint subsets $T_i$, $1\leq i\leq \beta$, such that $S\cap (G\setminus H)= \bigcup\limits_{i=1}^{\beta} T_i$, where for each $i$, $T_i\cup \{e\}$ is a right  transversal of $H$ in $G$. 
\end{corollary}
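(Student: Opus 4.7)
The plan is to unpack the hypothesis together with the last clause of Lemma \ref{1}. Start from a normal square-free $S \subseteq G$ realising $H$ as an $(\alpha,\beta)$-regular set in ${\rm CayS}(G,S)$. To compute $|S \cap H|$, I would fix any $h \in H$: the neighbours of $h$ lying in $H$ are the $y \in H$ with $hy \in S$, and the left-translation $y \mapsto hy$ bijects this set onto $S \cap hH = S \cap H$. Since $h$ has exactly $\alpha$ such neighbours, this gives $|S \cap H| = \alpha$.

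For the second count, I would apply the ``in particular'' clause of Lemma \ref{1}, which says $|S \cap Hx| = \beta$ for every $x \in G \setminus H$. The non-trivial right cosets of $H$ partition $G \setminus H$ into $[G:H] - 1$ blocks, so summing gives $|S \cap (G \setminus H)| = \beta([G:H] - 1)$; and $\beta \leq |H|$ is immediate from $|S \cap Hx| \leq |Hx| = |H|$. The identity $|S| = \alpha + \beta([G:H] - 1)$ then follows from $|S| = |S \cap H| + |S \cap (G \setminus H)|$.

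The last assertion is produced by a straightforward labelling. Enumerate the non-trivial cosets as $Hx_1, \dots, Hx_{[G:H]-1}$, list $S \cap Hx_j = \{s_{j,1}, \dots, s_{j,\beta}\}$ in any order, and set $T_i := \{s_{j,i} : 1 \leq j \leq [G:H] - 1\}$ for $1 \leq i \leq \beta$. By construction the $T_i$ are pairwise disjoint, their union is $S \cap (G \setminus H)$, and each $T_i$ meets every non-trivial coset of $H$ in exactly one point, so $T_i \cup \{e\}$ is a right transversal of $H$ in $G$. I do not expect any genuine obstacle here: the corollary is essentially bookkeeping built on Lemma \ref{1} together with the elementary fact that left-multiplication by a group element is a bijection.
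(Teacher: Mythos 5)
Your argument is correct and is exactly the bookkeeping the paper leaves implicit: the corollary is stated without proof as an immediate consequence of Lemma \ref{1}, whose ``in particular'' clause gives $|S\cap Hx|=\beta$ for $x\in G\setminus H$, and whose converse-direction argument contains the same coset-by-coset splitting of $S\cap(G\setminus H)$ into the transversal pieces $T_i$ that you construct. Your computation of $|S\cap H|=\alpha$ via the bijection $y\mapsto hy$ (with $y=h$ automatically excluded because $S$ is square-free) is the right way to fill in the one step the paper does not spell out.
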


\begin{lemma}\label{normal}
Let  $H$ be a normal subgroup of $G$.  Then  $H$ is  an $(\alpha,\beta)$-regular set of $G$  if and only if  $H$ is an  $(\alpha,0)$-regular set  of  $G$ and  a  $(0,\beta)$-regular  set of $G$.
\end{lemma}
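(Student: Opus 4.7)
The plan is to prove both directions directly, exploiting the key observation that when $H \trianglelefteq G$, for any $x \in G$ and $y \in H$ we have $xy \in H \iff x \in H$. This will allow us to split a connection set along $H$ and $G \setminus H$, with each piece independently responsible for the ``inside-$H$'' and ``outside-$H$'' regularity.

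For the forward direction, suppose $H$ is $(\alpha,\beta)$-regular via a normal square-free set $S \subseteq G$ with $e \notin S$. I would set $S_1 = S \cap H$ and $S_2 = S \cap (G \setminus H)$. Both are square-free (subsets of $S$), both avoid $e$, and both are normal: since $H$ is normal, conjugation by any $g \in G$ preserves $H$ and $G \setminus H$ setwise, so $g^{-1} S_i g = g^{-1}Sg \cap g^{-1}(\text{relevant set})g = S \cap (\text{relevant set}) = S_i$. For $x \in H$, the neighbors of $x$ in $H$ with respect to $S_1$ are $\{y \in H : xy \in S_1\}$; but $xy \in H$ automatically, so $xy \in S_1 \iff xy \in S$, giving $\alpha$ neighbors. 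For $x \notin H$ and $y \in H$, we have $xy \notin H$ by the key observation, so $xy \notin S_1$, yielding $0$ neighbors. Hence $H$ is $(\alpha,0)$-regular via $S_1$. A symmetric argument with $S_2$ yields $(0,\beta)$-regularity.

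For the backward direction, suppose $H$ is $(\alpha,0)$-regular via some $S_1$ and $(0,\beta)$-regular via some $S_2$. The $(\alpha,0)$-condition forces $S_1 \subseteq H$: indeed, for any $x \notin H$, the count $|\{y \in H : xy \in S_1\}| = 0$ means $S_1 \cap xH = \emptyset$; taking the union over representatives $x$ of nontrivial cosets shows $S_1 \subseteq H$. Dually, the $(0,\beta)$-condition with $x \in H$ forces $S_2 \cap H = \emptyset$, i.e.\ $S_2 \subseteq G \setminus H$. So $S_1$ and $S_2$ are disjoint; let $S = S_1 \cup S_2$. Then $S$ is normal (union of normal sets), square-free (disjoint union of square-free sets), and misses $e$. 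For $x \in H$, neighbors in $H$ via $S$ satisfy $xy \in H$, so only $S_1$ contributes, giving $\alpha$; for $x \notin H$, only $S_2$ contributes by the key observation, giving $\beta$. Thus $H$ is $(\alpha,\beta)$-regular via $S$.

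There is no genuine obstacle here; the entire argument rests on the normality of $H$ ensuring that (i) the decomposition $S = (S\cap H) \sqcup (S \cap (G\setminus H))$ respects conjugation, and (ii) products $xy$ with $y \in H$ cannot ``jump'' between $H$ and $G\setminus H$. The only point that deserves care is verifying that both $S_1$ and $S_2$ inherit the normality property, which is where normality of $H$ (not merely being a subgroup) is used.
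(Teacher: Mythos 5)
Your proof is correct and follows essentially the same route as the paper: split the connection set as $S\cap H$ and $S\cap(G\setminus H)$, note that normality of $H$ makes both pieces normal square-free sets, and recombine by taking the union in the converse direction. The extra details you supply (the verification that $S_1\subseteq H$ and $S_2\cap H=\emptyset$ in the backward direction, and the explicit conjugation check) are sound and merely flesh out what the paper leaves implicit.
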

\begin{proof}
If $H$ is a normal subgroup of $G$ and  $S$  a normal square-free subset of $G$, then   $S'=S\cap H$ and $S''=S\cap (G\setminus H)$ are  normal square-free subsets of $G$.  If $H$ is  an $(\alpha,\beta)$-regular set in ${\rm CayS}(G,S)$,  then  $H$ is an  $(\alpha,0)$-regular set  in {\rm CayS}$(G,S')$ and  a  $(0,\beta)$-regular  set  in {\rm CayS}$(G,S'')$. Conversely, if $H$ is an  $(\alpha,0)$-regular set  in {\rm CayS}$(G,S')$ and  a  $(0,\beta)$-regular  set  in {\rm CayS}$(G,S'')$, for some normal square-free subsets $S'$ and $S''$, then $H$ is an  $(\alpha,\beta)$-regular set  in {\rm CayS}$(G,S'\cup S'')$. 
\end{proof}

	\begin{nota}
		
Let $G$ be a  group and $H$ be a subgroup of $G$. Then we set $$\mathcal{L}(H)=\min \{|{\rm NSq}(G) \cap Hx| :  x\in G\setminus H\}.$$

\end{nota}

\begin{theorem}\label{main}
Let $H$ be a subgroup of  an abelian group $G$. Then, $\mathcal{L}(H)= |H|$ if {\rm Sq}$(G)\subseteq  H$, and $\mathcal{L}(H)=|${\rm NS}q$(G)\cap H|$,  otherwise.
\end{theorem}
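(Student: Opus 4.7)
The plan is to exploit the abelian structure: since $G$ is abelian, the squaring map $g \mapsto g^2$ is a homomorphism, so $\text{Sq}(G) = G^2$ is a subgroup of $G$. This reduces the problem to understanding how the coset $Hx$ meets the subgroup $\text{Sq}(G)$, because $|\text{NSq}(G) \cap Hx| = |H| - |\text{Sq}(G) \cap Hx|$.

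First, I would record the standard coset-intersection fact: for a subgroup $K \leq G$ and any coset $Hx$, the intersection $Hx \cap K$ is either empty or is a coset of $H \cap K$ inside $K$, so in the non-empty case it has size exactly $|H \cap K|$. Applied with $K = \text{Sq}(G)$, this gives a clean dichotomy: for every $x \in G \setminus H$, either $Hx \cap \text{Sq}(G) = \emptyset$, in which case $|\text{NSq}(G) \cap Hx| = |H|$, or $Hx \cap \text{Sq}(G) \neq \emptyset$, in which case $|\text{NSq}(G) \cap Hx| = |H| - |H \cap \text{Sq}(G)| = |\text{NSq}(G) \cap H|$. Since $e = e^2$ lies in $H \cap \text{Sq}(G)$, the second quantity is strictly less than $|H|$, so the minimum is attained precisely when some coset $Hx$ (with $x \notin H$) meets $\text{Sq}(G)$.

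Next, I would translate the two cases of the theorem into conditions on this dichotomy. The condition that \emph{every} coset $Hx$ with $x \in G\setminus H$ is disjoint from $\text{Sq}(G)$ is equivalent to $\text{Sq}(G) \cap (G\setminus H) = \emptyset$, i.e., to $\text{Sq}(G) \subseteq H$. In that case every coset outside $H$ contributes $|H|$, so $\mathcal{L}(H) = |H|$. Conversely, if $\text{Sq}(G) \not\subseteq H$, then picking any square $s \in \text{Sq}(G) \setminus H$ and setting $x = s$ produces a coset $Hs$ meeting $\text{Sq}(G)$, so the value $|\text{NSq}(G) \cap H|$ is realized and, being the smaller of the two possible values, it is the minimum.

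I do not anticipate a serious obstacle: the argument is essentially bookkeeping, and the only point that requires care is the initial reduction via the subgroup property of $\text{Sq}(G)$, which is exactly where abelianness of $G$ is used (for non-abelian $G$ the set of squares need not be a subgroup, which would break the coset-intersection step). The rest is a routine case split.
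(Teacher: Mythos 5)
Your proof is correct and rests on the same core observation as the paper's: in an abelian group the squares form a subgroup, so each coset $Hx$ either lies entirely in ${\rm NSq}(G)$ or meets ${\rm Sq}(G)$ in exactly $|H\cap {\rm Sq}(G)|$ elements, and the two cases of the theorem correspond to whether ${\rm Sq}(G)\subseteq H$. The paper reaches the same dichotomy via the translation bijection $y\mapsto yx$ for a square $x$ rather than via the coset-intersection lemma, but this is only a difference in packaging.
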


\begin{proof}
 Remind that  $\mathcal{L}(H)=\min \{|{\rm NSq}(G) \cap Hx| :  x\in G\setminus H\}.$ First, assume that  $x\in G \setminus H$ is a square element. In this case, for each non-square element  $y\in H$, $yx$ is a non-square element in $Hx$, and for each square element $y\in H$, $yx$ is a square element in $Hx$. Hence, $|$NSq$(G)\cap Hx|=|$NSq$(G)\cap H|$. 

Now, assume that $x\in G \setminus H$ is a non-square element. If $Hx$ has a square element, say $z$, then $Hx=Hz$ and, by the above discussion, we conclude that $|$NSq$(G)\cap Hx|=|$NSq$(G)\cap Hz|=|$NSq$(G)\cap H|$. If  all elements of $Hx$ are non-square, then  $|$NSq$(G)\cap Hx|=|Hx|=|H|$.  It follows that, if  Sq$(G)\subseteq  H$,   then  for every $x\in G\setminus H$, we have  $Hx\subseteq {\rm NSq}(G) $ and so $\mathcal{L}(H)=|H|$.  Otherwise,  
there exists $x\in G\setminus H$ which is square and so   $\mathcal{L}(H)=|$NSq$(G)\cap H|$. 
\end{proof}

 \maketitle

 The above  theorem is the main result of this section which has wide application for abelian groups. Using this result for given  abelian group $G$ and subgroup $H$ of $G$,  we can determine all possibilities for  $\beta$ such that  $H$ is a $(0, \beta)$-regular set of  $G$. Also it has a  significant role in giving very brief proofs for all the results related to abelian groups in  \cite{mama, ma}.

\begin{theorem}\label{first}
Let  $G$ be a  group and $H$ be a subgroup of $G$.

\begin{enumerate}

\item If $H$ is   an  $(\alpha,\beta)$-regular set of $G$, then $0\leq \alpha \leq  |{\rm NSq}(G)\cap  H|$ and  $0\leq \beta\leq \mathcal{L}(H)$. 

\item If   $G$ is abelian, then  $H$ is an $(\alpha,\beta)$-regular set of $G$ if and only if  $0\leq \alpha\leq |{\rm NSq}(G)\cap H|$ and  $0\leq \beta \leq \mathcal{L}(H)$.
\end{enumerate}
\end{theorem}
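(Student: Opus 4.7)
The plan is to split into the two directions and use the groundwork already laid. For part (1), suppose $H$ is an $(\alpha,\beta)$-regular set in some ${\rm CayS}(G,S)$ with $S$ a normal square-free subset, so in particular $S\subseteq {\rm NSq}(G)$. The neighbours of $e\in H$ lying in $H$ are exactly $\{h\in H:eh\in S\}=S\cap H$, which forces $\alpha=|S\cap H|\leq|{\rm NSq}(G)\cap H|$. For the bound on $\beta$, the final assertion of Lemma \ref{1} gives $|S\cap Hx|=\beta$ for every $x\in G\setminus H$; since $S\cap Hx\subseteq {\rm NSq}(G)\cap Hx$, taking the minimum over $x\in G\setminus H$ yields $\beta\leq \mathcal{L}(H)$.

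For part (2) I must produce a connection set realising an arbitrary admissible pair $(\alpha,\beta)$. Because $G$ is abelian, $H$ is normal, and Lemma \ref{normal} lets me construct an $(\alpha,0)$-piece and a $(0,\beta)$-piece separately and then take their union: the two connection sets will live in $H$ and in $G\setminus H$ respectively, so their union is automatically disjoint, square-free, and (trivially in the abelian case) normal. For the $(\alpha,0)$-piece I pick any $\alpha$ elements $S_1\subseteq {\rm NSq}(G)\cap H$; a direct check using that $H$ is a subgroup shows that in ${\rm CayS}(G,S_1)$ every $x\in H$ has exactly $|S_1|=\alpha$ neighbours (automatically in $H$), while every $x\notin H$ has none in $H$.

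For the $(0,\beta)$-piece I invoke the construction criterion of Lemma \ref{1}. The hypothesis $\beta\leq \mathcal{L}(H)$ says that each non-trivial coset $Hx$ contains at least $\beta$ non-square elements, so I may pick distinct non-squares $s_1^{Hx},\dots,s_\beta^{Hx}\in {\rm NSq}(G)\cap Hx$ from every such coset. Setting $T_i=\{s_i^{Hx}:Hx\neq H\}$ for $i=1,\dots,\beta$ yields $\beta$ pairwise disjoint sets such that each $T_i\cup\{e\}$ is a right transversal of $H$ in $G$, and $\bigcup_i T_i\subseteq {\rm NSq}(G)$ is square-free; normality is automatic by abelianness. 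Lemma \ref{1} then realises $H$ as a $(0,\beta)$-regular set, and combining with the $(\alpha,0)$-piece via Lemma \ref{normal} produces the desired $(\alpha,\beta)$-regular realisation. The only delicate step is pinpointing $\mathcal{L}(H)$ as the sharp upper bound, and this is exactly where abelianness is genuinely used in the converse: it makes normality of $S$ free and lines up the left/right coset counts that Lemma \ref{1} manipulates.
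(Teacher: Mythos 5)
Your proposal is correct and follows essentially the same route as the paper: part (1) via Lemma \ref{1} together with the inclusion $S\cap Hx\subseteq {\rm NSq}(G)\cap Hx$, and part (2) via the decomposition of Lemma \ref{normal} into an $(\alpha,0)$-piece chosen inside ${\rm NSq}(G)\cap H$ and a $(0,\beta)$-piece built from $\beta$ disjoint square-free transversals, with abelianness supplying normality of the connection set. No gaps.
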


\begin{proof} {\it (1)} Let $H$ be  an $(\alpha,\beta)$-regular set in CayS$(G,S)$ for some subset $S$ of $G$ and $\mathcal{L}(H)=|$NSq$(G)\cap Hx_{0}|$, where $x_{0}\in G \setminus H$.   Then,  $S\cap Hx_0\subseteq$ NSq$(G)\cap Hx_0$. Thus, by Lemma \ref{1},  $0 \leq \beta=|S\cap Hx_0|\leq \mathcal{L}(H)$. As $\alpha =|S\cap H|$, we have $\alpha\leq |{\rm NSq(G)}\cap H|$. 

 {\it (2)}   Now, assume  that  $G$ is abelian.  By Lemma \ref{normal} we only need to prove that $H$ is  an $(\alpha, 0)$-regular set   of $G$ and a $(0,\beta)$-regular set of $G$.   Let  $0\leq \alpha\leq |{\rm NSq}(G)\cap H|$. Then by choosing $S'$ to  be an arbitrary subset of  ${\rm NSq}(G)\cap H$ of size $\alpha$ we have that $H$ is an $(\alpha,0)$-regular  set in ${\rm CayS}(G, S')$. Let  $0\leq \beta\leq \mathcal{L}(H)$. Remark that,  by the assumption each coset of $H$ in $G$, except for $H$, has at least $\mathcal{L}(H)$ non-square elements. So by choosing $\mathcal{L}(H)$ non-square elements of each coset of $H$, besides  $H$,  we are able to construct $\mathcal{L}(H)$ pairwise disjoint square-free subsets $T_i$, for $1\leq i\leq \mathcal{L}(H)$, such that for each $i$, $T_i\cup \{e\}$ is a right  transversal of  $H$ in $G$.  As $G$ is abelian, $S= \bigcup\limits_{i=1}^{\beta} T_i$ is a normal subset of $G$.  So by Lemma \ref{1}, $H$ is a $(0, \beta)$-regular set of $G$. 
\end{proof}

\begin{remark}
 It is worth mentioning that if $H$ is an $(\alpha, \beta)$-regular set of an abelian group $G$, then there are   $\binom{\mathcal {L}(H)}{\beta}^{[G:H]} \binom{|{\rm NSq}(G)\cap H|}{\alpha}$ subsets $S$ such that $H$ is an   $(\alpha, \beta)$-regular set in ${\rm CayS}(G,S)$, according to the proof of Theorem  \ref{first}. 
\end{remark}

As we mentioned before,  Theorem \ref{first} leads to the following Corollary which makes us conclude Lemmas    \ref{3.1mama} and \ref{3.1ma},  (\cite[Theorem 3.1]{mama} and \cite[Theorem 3.1]{ma}),   more simply and concisely, and so all the other results related to the abelian groups in the same papers can be obtained more easily. 

 \begin{corollary}
Let $G$ be an abelian group and $H$ be a subgroup of $G$. Then $H$ is a perfect code of $G$ if and only if {\rm Sq}$(G)\subseteq  H$ or $\rm{NSq}(G)\cap H\not=\emptyset$. 
\end{corollary}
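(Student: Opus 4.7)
The plan is to derive this as an immediate consequence of Theorems \ref{main} and \ref{first}, by specialising to the parameters $(\alpha,\beta)=(0,1)$ that define a perfect code. The key observation is that a perfect code is exactly a $(0,1)$-regular set, so the question reduces to: when is $H$ a $(0,1)$-regular set of $G$? Part (2) of Theorem \ref{first} gives an explicit answer: $H$ is a $(0,1)$-regular set of $G$ if and only if $0\leq 0\leq |\mathrm{NSq}(G)\cap H|$ (which is automatic) and $0\leq 1\leq \mathcal{L}(H)$. So the problem reduces to characterising when $\mathcal{L}(H)\geq 1$.

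Next I would substitute the formula for $\mathcal{L}(H)$ provided by Theorem \ref{main} and split into the two cases of that theorem. In the first case, $\mathrm{Sq}(G)\subseteq H$, one has $\mathcal{L}(H)=|H|\geq 1$ automatically (since $H$ contains the identity), so $H$ is a perfect code. In the second case, $\mathrm{Sq}(G)\not\subseteq H$, one has $\mathcal{L}(H)=|\mathrm{NSq}(G)\cap H|$, which is $\geq 1$ precisely when $\mathrm{NSq}(G)\cap H\neq\emptyset$.

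Putting the two cases together, $\mathcal{L}(H)\geq 1$ holds if and only if either $\mathrm{Sq}(G)\subseteq H$, or $\mathrm{Sq}(G)\not\subseteq H$ and $\mathrm{NSq}(G)\cap H\neq\emptyset$; and this disjunction is logically equivalent to the simpler condition stated in the corollary, namely $\mathrm{Sq}(G)\subseteq H$ or $\mathrm{NSq}(G)\cap H\neq\emptyset$. Combining this with the reduction in the first paragraph yields the biconditional.

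There is no real obstacle here; the proof is essentially a two-line bookkeeping exercise once Theorems \ref{main} and \ref{first} are in place. The only point where one needs a little care is the logical equivalence in the last step: when $\mathrm{Sq}(G)\subseteq H$ holds, the second disjunct $\mathrm{NSq}(G)\cap H\neq\emptyset$ may or may not hold, but this does not affect the conclusion because the first disjunct already forces $\mathcal{L}(H)=|H|\geq 1$. Hence no case distinction on whether both disjuncts hold simultaneously is needed in the final statement.
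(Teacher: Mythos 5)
Your proposal is correct and follows exactly the paper's own route: reduce via Theorem \ref{first}(2) to the condition $\mathcal{L}(H)\geq 1$, then apply Theorem \ref{main} to translate that into the stated disjunction. The extra remarks on the logical equivalence of the case split are fine but not needed beyond what the paper records.
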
 
\begin{proof}
By  Theorem  \ref{first}, $H$ is a perfect code of $G$ if and only if $\mathcal{L}(H)\geq 1$.  On the other hand, $\mathcal{L}(H)\geq 1$ if and only if   ${\rm Sq}(G)\subseteq H$ or ${\rm NSq(G)}\cap H\not=\emptyset$, by Theorem \ref{main}. 
\end{proof}


Observe  that  if  $G$ is an abelian group and $H$ is  a $(0,\beta)$-regular set of $G$,  then for all $0\leq b\leq \beta$, $H$ is  a $(0,b)$-regular set of $G$.  Now, we prove the following theorems.  
\begin{theorem}
	Let $G$ be an abelian group and $H$ be a subgroup of $G$. Then $H$ is not a $(0,2)$-regular set of $G$ if and only if  one of the following occurs: 
	\begin{enumerate}
\item $H  \subsetneq $  {\rm Sq}$(G)$; 

\item $H$ is trivial and $G$ is an elementary abelian group; 

\item $G \cong H \times K$, where $H \cong C_{2}$ and $K$ has at least one non-trivial square element.
\end{enumerate}
\end{theorem}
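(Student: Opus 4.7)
The plan is to translate everything into a condition on $\mathcal{L}(H)$ and then invoke Theorem \ref{main}. By Theorem \ref{first}(2), $H$ fails to be a $(0,2)$-regular set of $G$ precisely when $\mathcal{L}(H) < 2$, i.e.\ $\mathcal{L}(H) \in \{0,1\}$. Recall from Theorem \ref{main} that $\mathcal{L}(H) = |H|$ when $\mathrm{Sq}(G) \subseteq H$, and $\mathcal{L}(H) = |\mathrm{NSq}(G) \cap H|$ otherwise, so each of the two possible small values for $\mathcal{L}(H)$ has two natural sub-cases.

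First I would dispose of the easy sub-cases. If $\mathrm{Sq}(G) \subseteq H$, then $\mathcal{L}(H) \le 1$ forces $|H|=1$, and then $\mathrm{Sq}(G) = \{e\}$, which is equivalent to $G$ being an elementary abelian $2$-group; this yields item (2). If $\mathrm{Sq}(G) \not\subseteq H$ and $\mathcal{L}(H)=0$, then $H \subseteq \mathrm{Sq}(G)$ together with $H \ne \mathrm{Sq}(G)$ gives $H \subsetneq \mathrm{Sq}(G)$, which is item (1). The converse in each situation is immediate by recomputing $\mathcal{L}(H)$ from Theorem \ref{main}.

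The bulk of the work lies in the remaining sub-case: $\mathrm{Sq}(G)\not\subseteq H$ and $|\mathrm{NSq}(G) \cap H|=1$. Let $h$ denote the unique non-square in $H$. Since $G$ is abelian, $\mathrm{Sq}(G)$ is a subgroup, so any $s \in H \setminus \{e,h\}$ must be a square; then $sh \in h\cdot \mathrm{Sq}(G) \subseteq \mathrm{NSq}(G)$, while still $sh \in H$. Uniqueness of $h$ forces $sh = h$, hence $s=e$, contradicting the choice of $s$. Thus $|H|=2$ and $H \cong C_2$. To realize $H$ as a direct factor, I would decompose $G = A \times B$ with $A$ the Sylow $2$-subgroup and $B$ the Hall $2'$-subgroup; then $h=(a,e_B)$ with $a \in A$ of order $2$, and $a \notin A^2$ (otherwise $h$ would be a square in $G$). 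The standard fact that an order-$p$ element lying outside $pA$ generates a direct summand of a finite abelian $p$-group then gives $A = \langle a \rangle \times A'$, whence $G = H \times K$ for $K = A' \times B$. Since squares in $G = H \times K$ are exactly the elements of $\{e_H\}\times \mathrm{Sq}(K)$, the hypothesis $\mathrm{Sq}(G) \not\subseteq H$ forces the existence of a non-trivial element of $\mathrm{Sq}(K)$, confirming item (3). The converse direction, starting from item (3), is a one-line computation: $\mathrm{NSq}(G) \cap H = \{h\}$ and $\mathrm{Sq}(G) \not\subseteq H$, so $\mathcal{L}(H) = 1 < 2$.

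The step I expect to cause the most friction is the splitting $A = \langle a \rangle \times A'$: being of order $2$ is not enough, one really needs $a \notin A^2$. I would justify this either by passing to the elementary abelian quotient $A/A^2$ and extending the non-zero image of $a$ to an $\mathbb{F}_2$-basis that can be lifted, or by directly citing the standard direct-summand criterion for finite abelian $p$-groups.
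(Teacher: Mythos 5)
Your proof is correct and follows essentially the same route as the paper: reduce to $\mathcal{L}(H)\le 1$ via Theorems \ref{first} and \ref{main}, split into the resulting three cases, and in the hardest one show $|H|=2$ and split $H$ off as a direct factor of the Sylow $2$-subgroup. The only (harmless) differences are that you use the subgroup structure of $\mathrm{Sq}(G)$ where the paper computes with $x^3$, and you explicitly check the converse direction and the condition $\mathrm{Sq}(K)\neq\{e\}$, both of which the paper leaves implicit.
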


\begin{proof}
	
	Suppose that $H$ is not a $(0,2)$-regular set of $G$. Theorem \ref{first} implies that  $\mathcal{L}(H)=0$ or $1$. If $\mathcal{L}(H)=0$,  then  ${\rm Sq}(G)\not\subseteq H $ and ${\rm NSq}(G)\cap H=\emptyset $, by Theorem  \ref{main}.  Thus,  $H \subsetneq$ Sq$(G)$. Now, assume that $\mathcal{L}(H)=1$. By Theorem \ref{main}, we get the following cases:
	
	Case 1: $\mathcal{L}(H)=|H|=1$. 
	
	In this case, Sq$(G)\subseteq  H=\{e\}$ and so every non-trivial element of $G$ is non-square, implying that the order of each non-trivial element is $2$. So $G$ is an elementary abelian $2$-group.
	
	Case 2: $\mathcal{L}(H)=|$NSq$(G)\cap H|=1$. 
	
	Let NSq$(G)\cap H =\{x\}$. As  $x^3\in {\rm NSq}(G)\cap H$,  we conclude that $x=x^3$ and so  the order of $x$ is $2$. If $y\in H  \setminus \{x\}$, then $y$ is a square and so $xy \in$ NSq$(G)\cap H$, which implies that $y=e$. Therefore, $H=\{e, x\}\cong C_{2}$. We may assume  that   $G = G_0\times G_1\times \cdots \times G_k\times A $, where  $k\geq 0$ and $G_i \cong C_{2^{\alpha_{i}}}$, for some integers $\alpha_{0}, ... ,\alpha_{k}$, and $A$ is the Hall $2'$-subgroup of $G$. Let $x=(l_0, l_1,\dots, l_k, a)$,  where $a\in A$ and  $l_i\in G_i $,  for $i=0,\dots, k$.  As $x^2=e$, we conclude that  $a=e $ and $o(l_j)\leq 2$, for each $j=0,\dots, k$.   As $x$  is non-square,  there exists $0\leq i_0\leq k $ such that $l_{i_0}$ is non-square, and so $l_{i_0}$ is a generator of $G_{i_0}$ of order $2$. So without loss of generality, we may assume that  $i_0=0$ and $c=l_{i_0}=l_0$. Then  $G_0=\{e,c\} $ and  $x=(c, l_1,l_2, ... , l_k,e)$.   Setting  $K=\{e\}\times G_1\times \cdots \times G_k\times A$,  we have $  H\cap K$ is trivial  and consequently,  $G = H\times K$.    	
	\end{proof}



\begin{theorem}
	Let $G$ be an abelian group and $H$ be a subgroup of $G$. Then $H$ is not a $(0,3)$-regular set of $G$ if and only if one of the following occurs:  
		\begin{enumerate}
	\item $H \subsetneq$ {\rm Sq}$(G)$;
	
\item  $G$ is an elementary abelian $2$-group and $|H| \leq 2$;

 \item  $G \cong H\times K$, where $H\cong C_2 $ and $K$ has at least one non-trivial square element;
	
\item   $G \cong \bold{E_2}(G)\times T$, where $T\cong C_{4}$,  and $H$ is the subgroup of order $2$ of $T$;

\item  $G\cong \bold{E_2}(G) \times K$ and   $H=\langle x,y\rangle$ where  $x=(m, k_1)$ and $y=(m, k_2)$, such that   $e\not =m \in \bold{E_2}(G)$  and $k_1, k_2\in K$   have orders at most $2$. 

  \item  $H$ is generated by a non-square element of order $4$;
       \end{enumerate} 
	
\end{theorem}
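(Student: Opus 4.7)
The plan is to convert the condition ``$H$ is not a $(0,3)$-regular set of $G$'' into the arithmetic inequality $\mathcal{L}(H)\le 2$ via Theorem \ref{first}, and then use Theorem \ref{main} to split into cases according to whether $\mathrm{Sq}(G)\subseteq H$ (so $\mathcal{L}(H)=|H|$) or not (so $\mathcal{L}(H)=|\mathrm{NSq}(G)\cap H|$). The values $\mathcal{L}(H)\in\{0,1\}$ are essentially covered by Theorem \ref{main} and the argument of the preceding theorem: $\mathcal{L}(H)=0$ is equivalent to $H\subsetneq\mathrm{Sq}(G)$, giving (1); the two subcases of $\mathcal{L}(H)=1$ recover the trivial-$H$ part of (2) and case (3).

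The substantive work is the case $\mathcal{L}(H)=2$. When $\mathrm{Sq}(G)\subseteq H$ and $|H|=2$, I would write $H=\{e,x\}$ and split on whether $\mathrm{Sq}(G)=\{e\}$ (giving the $|H|=2$ branch of (2)) or $\mathrm{Sq}(G)=\{e,x\}$; in the latter, counting squares in a decomposition $G=C_{2^{m_1}}\times\cdots\times C_{2^{m_k}}\times A_{2'}$ forces exactly one $m_i=2$ and the rest equal to $1$, so $G\cong\mathbf{E}_2(G)\times C_4$ with $H$ the unique order-$2$ subgroup of the $C_4$-factor, which is case (4).

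The remaining subcase is $\mathrm{Sq}(G)\not\subseteq H$ with $\mathrm{NSq}(G)\cap H=\{x_1,x_2\}$. Since in a finite abelian group every element of odd order is a square, the non-squares in $\langle x_i\rangle$ form a coset of $\langle x_i^2\rangle$ of size $o(x_i)/2$; requiring this to be at most $2$ forces $o(x_i)\in\{2,4\}$. If some $x_i$ has order $4$, then $\{x_i,x_i^{-1}\}$ already exhausts $\mathrm{NSq}(G)\cap H$, and any hypothetical extra element $h\in H\setminus\langle x_i\rangle$ would be a square with $hx_i$ non-square outside $\{x_i,x_i^{-1}\}$, a contradiction; so $H=\langle x_i\rangle\cong C_4$, giving (6). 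If instead $o(x_1)=o(x_2)=2$, I would pass to $G=\mathbf{E}_2(G)\times K$ (where $K$ has no $C_2$ direct factor), observe that every element of $K$ of order at most $2$ is a square of $K$, and conclude that writing $x_j=(u_j,v_j)$ forces $u_j\ne e$ and $v_j^2=e$. The element $x_1x_2$ must be a square (otherwise it is a third non-square in $H$), and since $v_1v_2$ is automatically a square of $K$, this boils down to $u_1u_2=e$, i.e.\ $u_1=u_2=:m$. A similar argument shows any further element of $H$ must coincide with $e,(m,v_1),(m,v_2)$, or $(e,v_1v_2)$, pinning down case (5).

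For the converse I would substitute each structure (1)--(6) into Theorem \ref{main} to compute $\mathcal{L}(H)$ explicitly and confirm $\mathcal{L}(H)\le 2$ in every instance. The main obstacle is the $\mathcal{L}(H)=2$ analysis with $\mathrm{Sq}(G)\not\subseteq H$: one must simultaneously control the orders of the non-square elements of $H$, rule out $|H|>4$, and translate the resulting restriction into the explicit direct-product data of (4), (5), (6); the bookkeeping between the $\mathbf{E}_2(G)$-coordinate and the $K$-coordinate is the delicate part, since the condition ``$x_1x_2$ is a square'' couples the two factors in precisely the way that forces $u_1=u_2$.
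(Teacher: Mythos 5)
Your proposal is correct and follows essentially the same route as the paper: reduce to $\mathcal{L}(H)\le 2$ via Theorem \ref{first}, split according to Theorem \ref{main}, and in the case $|{\rm NSq}(G)\cap H|=2$ derive $o(x)\in\{2,4\}$ for the non-squares $x$ of $H$ (your coset-of-$\langle x^2\rangle$ count is just a repackaging of the paper's observation that $x^3\in\{x,y\}$) before carrying out the same $\mathbf{E}_2(G)\times K$ bookkeeping for cases (4)--(6).
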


\begin{proof}
	Suppose that $H$ is not a $(0,3)$-regular set of $G$.  Theorem  \ref{first} implies that $\mathcal{L}(H) \in \{0,1,2\}$. If $\mathcal{L}(H)=0$ or $1$, then the result is the same as the previous theorem and  $H$ satisfies one of the Cases (1)-(3).   So suppose that $\mathcal{L}(H)=2$.

First, assume that Sq$(G) \subseteq  H$, hence $\mathcal{L}(H)= |H|=2$. In this case, if $|{\rm Sq}(G)|=1$, then   $G \cong C_2^{\alpha}$, for some $\alpha$ and $H$ is any subgroup of $G$ of order $2$, which is Case (2). If $|$Sq$(G)|=2$, then, as  all  elements in the Hall $2'$-subgroup of $G$ are square,  we deduce that $G =  \bold{E_2}(G) \times T$, where $T\cong C_4$ and  $H $ is the unique  subgroup of order $2$ of $T$, which is Case (4).

	Now, suppose that $\mathcal{L}(H)=|$NSq$(G)\cap H|=2$. Let NSq$(G)\cap H=\{x,y\}$. Then $x^3\in H$ is a non-square element. Therefore, either $x^3=x$ or $x^3=y$. 

$\bullet$ First, assume  $x^3=x$,   and so $o(x)=2$. Let $z\in H\setminus \{x,y\}$. Then, $xz\in {\rm NSq}(G)\cap H$ and hence $xz=x$ or $xz=y$. Thus, either $z=e$ or   $z=x^{-1}y$, which means  that $|H|=4$.  As $o(x)=2 $ and $x$ is non-square, then $H\cong C_2\times C_2$.  
 Assume  $G = G_0\times G_1\times \cdots \times G_n\times A $, where  $n\geq 0$ and $G_i \cong C_{2^{\alpha_{i}}}$, for some integers $\alpha_{0}, ... ,\alpha_{n}$, and $A$  is  the Hall $2'$-subgroup of $G$.  Let $x=(l_0, l_1,\dots, l_n, a)$,  where $a\in A$ and  $l_i\in G_i $,  for $i=0,\dots, n$.  As $x^2=e$, we conclude that  $a=e$ and $o(l_j)\leq 2$, for each $j$. Since  $x$  is non-square,  there exists $0\leq i_0\leq n $ such that $l_{i_0}$ is a non-square element of $G_{i_0}$, and so  $l_{i_0}$ is the generator of $G_{i_0}\cong C_2$. 
 As  $G=\bold{E_2}(G)\times K$, for some subgroup $K$ of $ G$, we   conclude that there exists $0\not =m\in \bold{E_2}(G)$  and $k_1\in K$  such that  $x=(m, k_1)$, where $o(k_1)\leq 2$.   Now, consider $z=x^{-1}y\in H$. As $z$ is square and it has order $2$ then $z=(e, k)\in \bold{E_2}(G)\times K$, for some $k\in K$ of order $2$. So $y=xz=(m, k_1k)$ has the form described in Case (5). 

$\bullet$ Now, assume $x^3=y$.  If $z \in H\setminus \{e,x,y\}$, then, as $xz$ is non-square and $z$ is non-trivial, we deduce that  $xz=x^3=y$ and so $z=x^2$. Therefore $H$ is the  cyclic group of order 4, generated by $x$, which is Case (6).
\end{proof}

\begin{remark}
Note that in Case (6), $H$ is not necessarily a direct factor of $G$. For instance,  $G=C_2\times C_8$ and $H=\langle (1,2)\rangle$ is satisfying Case (6), however $H$ is not a direct  factor of $G$. 
\end{remark}

\section{dihedral groups}

 Throughout this section we assume  $n\geq 3$,   $G=D_{2n}=\left\langle a,b: a^{n} = b^{2} = e, b^{-1}ab = a^{-1}\right\rangle$,  the dihedral group of order $2n$. We note that $o(a^{i}b)=2$, for each $1 \le i \le n$ and $a^sba^kb^l=a^{s-k}b^{l+1}$, for all integers  $s,k$ and $l$. The subgroups of $G$   (see for example  \cite{estef}) are  as follows:

\begin{itemize}
 \item   the cyclic subgroup $H= \left\langle a^{t}\right\rangle $,   where $t$ divides $n$;  and  so $[G: H ]=2t$;  

\item  the subgroup   $H=\left\langle a^{t},a^{s}b \right\rangle $, where $t$ divides $n$ and $0 \leq s \leq t - 1$; which is   isomorphic to $C_2$ (when $t=n$),  $C_2\times C_2$ (when  $n$ is even and $t=n/2$) or the  dihedral group $D_{2n/t}$ (when $t$ is a proper divisor of $n$ and $t\not = n/2$).  In this case   $[G:H]=t$. 
\end{itemize}

 We use the above notations without further references.

\begin{lemma} \cite{conjugacy} \label{3.1} 
	 Suppose that $n = 2m$, for some positive integer $m > 3$. Then $G=D_{2n}$ has $m + 3$ conjugacy classes as follows:	 
	 $ \{e\}$, $\{a^{m}\}$, $b^G=\{a^{2 j}b : 0 \leq j \leq m-1\}, $  $(ab)^G=\{ a^{2j+1}b : 0 \leq j \leq  m-1\}, $ $\{a^i , a^{-i} \}$,
 where $1 \leq i \leq m-1$.
\end{lemma}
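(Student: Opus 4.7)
The plan is to prove the lemma by a direct computation in $G=D_{2n}$, splitting the $2n$ elements into the $n$ rotations $a^i$ ($0\leq i\leq n-1$) and the $n$ reflections $a^i b$ ($0\leq i\leq n-1$), and determining the conjugacy class of each kind separately. The defining relation $b^{-1}ab=a^{-1}$ (together with $b^{-1}=b$) and the displayed identity $a^sba^kb^l=a^{s-k}b^{l+1}$ give us everything we need; there is no real obstacle beyond careful bookkeeping, the one mild subtlety being the use of $n=2m$ to identify the fixed points of $i\mapsto -i$ modulo $n$.

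First I would handle the rotations. Conjugation by $a^j$ fixes $a^i$, and conjugation by $a^jb$ yields $a^j(ba^ib^{-1})a^{-j}=a^ja^{-i}a^{-j}=a^{-i}$, so the conjugacy class of $a^i$ is exactly $\{a^i,a^{-i}\}$. This pair collapses to a singleton precisely when $a^{2i}=e$, i.e.\ when $n\mid 2i$; since $n=2m$, this happens only for $i=0$ and $i=m$, giving the singleton classes $\{e\}$ and $\{a^m\}$. The remaining rotations pair up as $\{a^i,a^{-i}\}$ for $1\leq i\leq m-1$, producing $m-1$ classes of size $2$ and $m+1$ classes in total from $\langle a\rangle$.

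Next I would handle the reflections. Using the given identity, I compute $a^j(a^ib)a^{-j}=a^{i+2j}b$ and, since $(a^jb)^{-1}=a^jb$, also $(a^jb)(a^ib)(a^jb)^{-1}=(a^{j-i})(a^jb)=a^{2j-i}b$. Both exponents shift $i$ by an even amount (modulo $n=2m$), so the conjugacy class of $a^ib$ is exactly the set of $a^kb$ with $k\equiv i\pmod 2$. This gives two classes: $b^G=\{a^{2j}b:0\leq j\leq m-1\}$ and $(ab)^G=\{a^{2j+1}b:0\leq j\leq m-1\}$, each of size $m$, and no further collapse is possible since $i$ and $-i$ have the same parity.

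Finally I would total the count: $2$ singleton rotation classes, $m-1$ size-$2$ rotation classes, and $2$ reflection classes, giving $m+3$ conjugacy classes, and verify the class equation $1+1+2(m-1)+m+m=4m=2n$ as a sanity check. The argument assumes $m\geq 2$ (so that $\{e\}$ and $\{a^m\}$ are distinct) and $m\geq 2$ is automatic from the hypothesis $m>3$; the hypothesis $m>3$ is not actually used beyond ensuring that the size-$2$ classes $\{a^i,a^{-i}\}$ form a non-empty range.
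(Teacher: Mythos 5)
Your computation is correct and is the standard direct verification; the paper itself gives no proof of this lemma, simply citing James--Liebeck, so there is no argument to compare against. Your closing observation is also apt: the hypothesis $m>3$ is superfluous (the classification holds for all $m\geq 1$, with the family $\{a^i,a^{-i}\}$, $1\leq i\leq m-1$, simply empty when $m=1$), which is consistent with the paper later applying the lemma to $D_8$, i.e.\ $m=2$.
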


\begin{lemma} \cite{conjugacy} \label{odd n} 
	 Suppose that $n$ is an odd integer. Then $G=D_{2n}$ has $(n+3)/2$ conjugacy classes as follows:	 
	 $ \{e\}, $ $ b^G=\{a^{j}b : 0 \leq j \leq n-1\},$ $ \{a^{i} , a^{-i} \},$
 where $1\leq i \leq (n-1)/2$.
\end{lemma}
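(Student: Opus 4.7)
The plan is to work directly from the presentation $G=\langle a,b : a^n = b^2 = e,\ b^{-1}ab = a^{-1}\rangle$. Every element of $G$ can be written uniquely as $a^i$ or as $a^i b$ with $0 \le i \le n-1$, and the defining relation yields $b a^i b^{-1} = a^{-i}$ and, more generally, $(a^j b^\varepsilon) a^i (a^j b^\varepsilon)^{-1} = a^{(-1)^\varepsilon i}$ for $\varepsilon \in \{0,1\}$. So the first step is to record these two conjugation formulas and then split the analysis into the rotation part $\langle a\rangle$ and the reflection coset $\langle a\rangle b$.

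For the rotations, the computation above shows that the conjugates of $a^i$ in $G$ are exactly $a^i$ and $a^{-i}$. I would then use that $n$ is odd to argue $a^i = a^{-i}$ iff $2i \equiv 0 \pmod n$ iff $i \equiv 0 \pmod n$, so the only singleton rotation class is $\{e\}$ and the remaining classes are the pairs $\{a^i,a^{-i}\}$ for $1 \le i \le (n-1)/2$, giving $(n-1)/2$ two-element classes among the rotations.

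For the reflections, I would compute $a^j (a^i b) a^{-j} = a^{i+2j} b$ using $b a^{-j} = a^j b$, and also $b (a^i b) b^{-1} = a^{-i} b$. Since $n$ is odd, $2$ is a unit modulo $n$, so the map $j \mapsto i + 2j \pmod n$ is a bijection on $\mathbb Z/n\mathbb Z$. Consequently the conjugacy class of $b$ (equivalently of any $a^i b$) under conjugation by the rotation subgroup already sweeps out every element of the coset $\langle a\rangle b$, yielding the single class $b^G = \{a^j b : 0 \le j \le n-1\}$.

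Finally I would tally the classes: one class $\{e\}$, plus $(n-1)/2$ pairs among the nontrivial rotations, plus the single reflection class, for a total of $1 + (n-1)/2 + 1 = (n+3)/2$, matching the statement. The only point that might warrant care is verifying that the listed pairs are genuinely distinct (i.e.\ that $\{a^i,a^{-i}\}\ne\{a^{i'},a^{-i'}\}$ for $i\ne i'$ in $\{1,\dots,(n-1)/2\}$), which again follows from $n$ being odd, and confirming there is no overlap between rotations and reflections (immediate, since one set has no element of the form $a^k b$). I do not expect any real obstacle; the argument is essentially bookkeeping driven by the parity hypothesis on $n$.
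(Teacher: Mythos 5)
Your proof is correct and is the standard computation: the paper simply cites this fact from James--Liebeck without proof, and your argument (conjugation formulas splitting the analysis into $\langle a\rangle$ and $\langle a\rangle b$, with the parity of $n$ ensuring $a^i\neq a^{-i}$ for $i\not\equiv 0$ and that $j\mapsto i+2j$ is a bijection mod $n$) is exactly the textbook derivation. No gaps; the tally $1+(n-1)/2+1=(n+3)/2$ is right.
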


Note that, for each  $1 \le i \le n$, $a^{i}b$ is a non-square element in $D_{2n}$. If $n$ is odd, then $a^i$ is square,  for $i=1,\dots,  n$. Also, 
if $n$ is even, then $a^{2i}$ is  square and $a^{2i+1}$ is non-square, for $i=0,\dots, (n-2)/2$.

\begin{example} For  comprehending the topic, we describe some examples of an  $(\alpha, \beta)$-regular set  of dihedral group  $G=D_8$: 

$\bullet$
 Let $H=\left\langle a^2b \right\rangle$. Then $[G:H]=4$ and the set of cosets of $H$ in $G$ is  $\{H, Ha, Ha^2, Ha^3\}$. In this case,  $ {\rm NSq}(G) \cap H= \{ a^2b\}$, $ {\rm NSq}(G) \cap Ha=Ha = \{a, ab\}$,     ${\rm NSq}(G) \cap Ha^2=\{b\}$ and ${\rm NSq}(G) \cap Ha^3=Ha^3=\{a^3, a^3b\}$. So, $\mathcal{L}(H)=1$ and  by Theorem  \ref{first}$, \alpha , \beta \leq 1$. Then,   $S= b^G \cup (ab)^G$ is a normal square-free subset of $G$ and $|S \cap Hx|=1$,  for all $x\in G$.  By  Lemma \ref{1} and  Corollary \ref{gir},  $H$  is a $(1,1)$-regular set in CayS$(G, S)$.  For more details see  Figure 1(a),


$\bullet$
Let $H=\left\langle a^2, ab \right\rangle$. Then  $[G:H]=2$. Note that, $(ab)^G$ is the only normal square-free subset of $G$ contained in $H$ and the square-free conjugacy classes of $G$ contained in $G \setminus H$ are $\{a, a^3\}$ and $b^G$. Set $S^{\prime \prime}= \emptyset$, $\{a, a^3\} $ or $ b^G$ or $\{a, a^3\}\cup b^G$.
Setting  $ S=(ab)^G \cup S^{\prime\prime}$, we have  $H$ is a $(2, |S^{\prime\prime}|)$-regular set in CayS$(G, S)$. For instance $H$ is a $(2,2)$-regular set in ${\rm CayS}(G, (ab)^G\cup b^G)$, which can be seen in Figure (b).

In  these  graphs, for both examples,  the vertices in $H$ are  blank,  the edges between two vertices in $H$ are dotted line  and the edges between a vertex  of $H$   and a vertex  of $G\setminus H$ are  bold.   

\end{example}

\begin{figure}[h]
\begin {center}
\begin{tikzpicture}[circ/.style={circle, draw, fill}]

\vertex[] (v1) at (0.5,0) [] [label=above:$e$] {};
\vertex[] (v2) at (2,0) [][label=above:$a^2 b$] {};

\vertex[fill] (v3) at (-1,-1) [label=left:$a$] {};
\vertex[fill] (v4) at (-1,-2) [label=left:$a^2$] {};
\vertex[fill] (v5) at (-1,-3) [label=left:$a^3$] {};

\vertex[fill] (v6) at (3.5,-1) [label=right:$b$] {};
\vertex[fill] (v7) at (3.5,-2) [label=right:$ab$] {};
\vertex[fill] (v8) at (3.5,-3) [label=right:$a^3b$] {};

\path (v1) edge [dashed]  (v2);

\path (v3) edge  (v8);
\path (v3) edge [ultra thick](v2);
\path (v3) edge  (v7);
\path (v3) edge  (v6);

\path (v4) edge  (v8);
\path (v4) edge[ultra thick] (v2);
\path (v4) edge  (v7);
\path (v4) edge  (v6);

\path (v5) edge  (v8);
\path (v5) edge [ultra thick] (v2);
\path (v5) edge  (v7);
\path (v5) edge  (v6);

\path (v1) edge[ultra thick](v8);
\path (v1) edge [ultra thick]  (v7);
\path (v1) edge [ultra thick] (v6);

\end{tikzpicture}
\hspace{2cm}
\begin{tikzpicture}[circ/.style={circle, draw, fill}]

\vertex[] (v1) at (0.5,0) [] [label=above:$e$] {};
\vertex[fill] (v2) at (2,0)[label=above:$a^2 b$] {};

\vertex[fill] (v3) at (-1,-1) [label=left:$a$] {};
\vertex[] (v4) at (-1,-2) [][label=left:$a^2$] {};
\vertex[fill] (v5) at (-1,-3) [label=left:$a^3$] {};

\vertex[fill] (v6) at (3.5,-1) [label=right:$b$] {};
\vertex[] (v7) at (3.5,-2)[] [label=right:$ab$] {};
\vertex[] (v8) at (3.5,-3)[] [label=right:$a^3b$] {};

\path (v1) edge [ultra thick]  (v2);

\path (v3) edge [ultra thick](v8);
\path (v3) edge (v2);
\path (v3) edge [ultra thick] (v7);
\path (v3) edge  (v6);

\path (v4) edge  [dashed](v8);
\path (v4) edge [ultra thick](v2);
\path (v4) edge [dashed] (v7);
\path (v4) edge  [ultra thick](v6);

\path (v5) edge[ultra thick]  (v8);
\path (v5) edge   (v2);
\path (v5) edge [ultra thick] (v7);
\path (v5) edge  (v6);

\path (v1) edge [dashed] (v8);
\path (v1) edge [dashed]  (v7);
\path (v1) edge[ultra thick] (v6);

\end{tikzpicture}
\end{center}
\caption{(a) (1,1)-regular set \hspace{4cm}  (b) (2,2)-regular set }
\end{figure}

\begin{theorem}
Let  $H=\langle a^sb \rangle$, where $0\leq s\leq n-1$,  be a subgroup of $G=D_{2n}$. Then $H$ is an $(\alpha, \beta)$-regular set of $G$, if and only if $(\alpha, \beta)=(1, 1)$.
\end{theorem}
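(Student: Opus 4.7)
The plan is to bound $\alpha$ and $\beta$ using Theorem~\ref{first}(1), rule out all remaining candidates for $(\alpha,\beta)$ except $(1,1)$, and exhibit an explicit connection set realising $(1,1)$.

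First I would observe that $(a^sb)^2 = a^s\cdot a^{-s}\cdot b^2 = e$, so $H = \{e, a^sb\}$ has order $2$ and $[G:H]=n$. Since $ba^j = a^{-j}b$, the $n-1$ nontrivial right cosets of $H$ are the two-element sets $\{a^j, a^{s-j}b\}$ for $1\le j\le n-1$. Because $a^sb$ is non-square and $e$ is a square, $|\mathrm{NSq}(G)\cap H|=1$. In each nontrivial coset the element $a^{s-j}b$ is always non-square, while $a^j$ is non-square only when $n$ is even and $j$ is odd; in particular $j=2$ witnesses $\mathcal{L}(H)=1$. Theorem~\ref{first}(1) now forces $\alpha,\beta\in\{0,1\}$.

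Next I would eliminate the three remaining candidates other than $(1,1)$. If $\beta=0$, Corollary~\ref{gir} forces $S\subseteq H$; but $S$ must be a union of $G$-conjugacy classes of non-square elements, and the class $(a^sb)^G$ has size $n$ (if $n$ is odd) or $n/2\ge 2$ (if $n$ is even), so it cannot sit inside the two-element set $H$. Hence $S=\emptyset$ and $\alpha=0$, the excluded trivial case. If instead $\alpha=0$ then $a^sb\notin S$, and normality of $S$ forces the whole class $(a^sb)^G$ to be disjoint from $S$. Consider the coset $C=\{a^2, a^{s-2}b\}$, which exists since $n\ge 3$: the element $a^2$ is a square, so $a^2\notin S$, and $a^{s-2}b\in (a^sb)^G$ because $s-2\equiv s\pmod 2$, so $a^{s-2}b\notin S$ either. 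Thus $|S\cap C|=0$, contradicting $\beta=1$ by Lemma~\ref{1}.

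Finally, I would take $S=\{a^ib : 0\le i\le n-1\}$ to realise $(1,1)$. This set is square-free, and it is a union of conjugacy classes of $G$ (namely $b^G$ when $n$ is odd, and $b^G\cup (ab)^G$ when $n$ is even), hence normal. The verification that $H$ is $(1,1)$-regular in $\mathrm{CayS}(G,S)$ is short: $e\cdot a^sb = a^sb\in S$ gives the edge inside $H$; for $x=a^i\notin H$ we have $a^i\notin S$ while $a^sb\cdot a^i = a^{s-i}b\in S$; and for $x=a^ib\notin H$ we have $a^ib\in S$ while $a^sb\cdot a^ib = a^{s-i}\notin S$. Each vertex outside $H$ thus has exactly one neighbour in $H$.

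The only real obstacle is ruling out $(0,1)$; the clean observation that makes it painless is that normality together with the square-free condition is very restrictive in $D_{2n}$: every $a^j$ with $j$ even is automatically excluded from $S$, and the conjugacy class of $a^sb$ is determined solely by the parity of $s$, so the single coset $\{a^2, a^{s-2}b\}$ already witnesses the obstruction.
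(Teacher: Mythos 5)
Your argument is correct and follows essentially the same route as the paper: bound $\alpha,\beta\le 1$ via $\mathcal{L}(H)=1$ and Theorem~\ref{first}, use normality of $S$ together with the fact that the conjugacy class of $a^sb$ meets both $H$ and the coset $Ha^2$ to exclude $(1,0)$ and $(0,1)$, and realise $(1,1)$ with $S=b^G\cup(ab)^G$. The computations (coset structure, parity of exponents, and the adjacency check in ${\rm CayS}(G,S)$) all check out.
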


\begin{proof}
 Let  $H$ be an $(\alpha,\beta)$-regular set in ${\rm CayS}(G, S)$,  for some subset $S$ of $G$.   Note that $Ha^k=\{a^{k}, a^{s-k}b\}$, for each integer $k$.  Therefore, ${\rm NSq}(G)\cap Ha^{2i}=\{ a^{s-2i} b\}$,  for each integer $i$.  Also, if $n$ is even, then   ${\rm NSq}(G)\cap Ha^{2i+1}=\{ a^{2i+1}, a^{s-2i-1}b \}$ and   if $n$ is odd, then  ${\rm NSq}(G)\cap Ha^{2i+1}=\{ a^{s-(2i+1)}b\}$, for each integer $i$.   Therefore, by Theorem \ref{first}, $\alpha\leq  1$ and   $\beta\leq \mathcal{L}(H)=1$. Now, we claim that if $\beta=1$,  then $\alpha=1$.  Let $\beta=1$. 
By Lemma \ref{1},   $a^{s-2i}b\in  S$,  for each $i$. Therefore, as $S$ is a normal subset of $G$,  by Lemmas \ref{3.1} and \ref{odd n}, we get that  $a^sb\in (a^{s-2i}b)^G\subseteq S$ (as the parity of $s$ and $s-2i$ are the same), and so $\alpha=|S \cap H |=1$.

If $\beta=0$, then,  by Lemma \ref{1},   for each $1\leq i\leq n-1$,  we have  $|S\cap Ha^i|=0$. So  $a^{s-2}b\not \in S$.  As $a^sb\in (a^{s-2}b)^G$ and $S\cap (a^{s-2}b)^G=\emptyset$, it follows that $\alpha=0$ and $S=\emptyset$, which is excluded. Thus, the only possibility is  $(\alpha, \beta)= (1, 1)$.

  In addition, by taking $S=(ab)^G\cup b^G$, we get that $H$ is a  $(1,1)$-regular set of $G$. 
\end{proof}

\begin{theorem}
	Let $n$ be an odd integer,   $t>1$ a  proper divisor of $n$, and  $0 \leq s\leq t-1$.  Let  $H=\left\langle a^{t}, a^{s}b \right\rangle$ be a subgroup of $G=D_{2n}$. Then $H$ is an $(\alpha, \beta)$-regular set  of  $G$ if and only if $(\alpha, \beta)=(|H|/2, |H|/2)$.
\end{theorem}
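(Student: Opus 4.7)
The plan is to combine the upper bounds of Theorem~\ref{first} with the observation that, for odd $n$, the reflections of $G=D_{2n}$ form a single conjugacy class, leaving essentially no freedom in the choice of the connection set. Since $t$ is a proper divisor of $n$ with $t>1$ and $n$ is odd, we have $n/t\ge 3$ and $H=\langle a^t,a^sb\rangle\cong D_{2n/t}$, so $|H|=2n/t$ and $[G:H]=t$.

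First I would identify the squares and the invariant $\mathcal{L}(H)$. Because $n$ is odd, squaring is a bijection on $\langle a\rangle$, so $\mathrm{Sq}(G)=\langle a\rangle$ and $\mathrm{NSq}(G)=\{a^jb:0\le j<n\}$. For any right coset $Hx$, a direct expansion using $ba^k=a^{-k}b$ splits $Hx$ into $n/t$ rotations and $n/t$ reflections; in particular $|\mathrm{NSq}(G)\cap Hx|=n/t$ for every $x\in G$. Hence $|\mathrm{NSq}(G)\cap H|=n/t$ and $\mathcal{L}(H)=n/t$, and Theorem~\ref{first}(1) delivers the upper bounds $\alpha\le n/t=|H|/2$ and $\beta\le n/t=|H|/2$.

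Next I would exploit the normality of the connection set. Any normal square-free subset $S$ of $G$ must be a union of conjugacy classes contained in $\mathrm{NSq}(G)$, but by Lemma~\ref{odd n} there is only one such class, namely $b^G=\{a^jb:0\le j<n\}$. Consequently $S\in\{\emptyset,b^G\}$; since $(\alpha,\beta)=(0,0)$ is excluded by the standing convention, the only candidate is $S=b^G$. Plugging this into the coset counts of the previous paragraph yields $|S\cap H|=n/t$ and $|S\cap Hx|=n/t$ for every $x\in G\setminus H$, and square-freeness of $S$ precludes loops; these incidence counts are therefore exactly the regularity parameters, forcing $(\alpha,\beta)=(n/t,n/t)=(|H|/2,|H|/2)$. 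In the converse direction, taking $S=b^G$ realises $H$ as an $(|H|/2,|H|/2)$-regular set, completing the equivalence.

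The only point demanding genuine care is the coset bookkeeping: one must expand $Hx$ separately for $x=a^k$ (with $t\nmid k$) and for $x=a^kb$, and verify in each case that exactly $n/t$ entries are reflections. Beyond that, there is no real obstacle; the argument is driven entirely by the single-conjugacy-class structure of the reflections when $n$ is odd, which eliminates every choice in $S$ and therefore in $(\alpha,\beta)$.
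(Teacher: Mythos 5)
Your argument is correct and follows essentially the same route as the paper's proof: since $n$ is odd, $\mathrm{NSq}(G)=b^G$ is a single conjugacy class, so the only non-empty normal square-free connection set is $S=b^G$, and the coset count $|b^G\cap Hx|=|H|/2$ for every $x$ then forces $(\alpha,\beta)=(|H|/2,|H|/2)$. The preliminary appeal to Theorem~\ref{first} for upper bounds is harmless but not needed once $S=b^G$ is forced.
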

\begin{proof}
 By the assumption,    $H$ is isomorphic to $D_{2n/t}$. Let $H$ be an   $(\alpha, \beta)$-regular set   in  {\rm CayS}$(G,S)$,  for some  subset  $\emptyset\not =S\subseteq G$.  Since  $S\subseteq {\rm NSq}(G)=b^G$ and  $S$ is normal,  we get that  $S=b^G$.   Note that  $H=\{a^{rt}, a^{rt+s}b \ |\   \text{where $1\leq r\leq n/t$} \}$  and so  for every integer $k\in \{0,\dots, n-1\}$ and $l\in \{0,1\}$,  
$Ha^{k}b^l=\{a^{rt+k}b^l, a^{s+rt-k}b^{l+1} | \ \text{where $1\leq r\leq n/t$}\}.$ 
  We can see that   $|b^G \cap Ha^{k}b^l|=|H|/2$. Therefore,   $\alpha=|S\cap H|=| b^G\cap H|=|H|/2$,  and $\beta=|S \cap Hx|=| b^G \cap Hx|=|H|/2$,  for each $x\in G\setminus H$. 
\end{proof}
\begin{theorem}\label{3.5}
	Let $n=2m \geq 4$,  for some integer $m$,  $t>1$ is a proper divisor of $n$ and  $0 \leq s\leq t-1$.  Suppose $H=\left\langle a^{t}, a^{s}b \right\rangle$ is  a subgroup of $G=D_{2n}$. Then $H$ is an $(\alpha, \beta)$-regular set of $G$,  $(\alpha,\beta)\not=(0,0)$,  if and only if one of the following occurs:

1)  $\alpha\in \{0, |H|/2\}$ and $0\leq \beta\leq |H|$, when $t=2$ and $m$ is odd;
 
2)  $\alpha \in \{0,|H|/2\}$ and $0 \leq \beta=2\gamma \leq |H|$, for some integer $\gamma$, when $t=2$ and $m$ is even;
 
3) $(\alpha, \beta)=(|H|/2, |H|/2)$,  when  $t>2$ is even;
	
	4) 	   $(\alpha, \beta)\in  \{(\eta, \zeta), (\eta+m/t, \zeta+m/t), (\eta + 2m/t, \zeta +2m/t) \ : \  0\leq \eta, \zeta \leq m/t \}$,  when  $m$ is odd and $t$ is a divisor of $m$;
	   
	   5) 
	   $(\alpha, \beta)\in  \{(2\eta, \zeta), (2\eta+m/t, \zeta+m/t), (2\eta + 2m/t, \zeta +2m/t) \ : \    0\leq \eta \leq m/2t ,  \ 0\leq  \zeta \leq m/t \}$, when  $m$ is even and $t$ is odd.
\end{theorem}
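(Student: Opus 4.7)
The plan is to use the fact that any normal square-free subset $S$ of $G = D_{2n}$ decomposes as a union of $G$-conjugacy classes lying in ${\rm NSq}(G)$. By Lemma \ref{1} and Corollary \ref{gir}, the condition that $H$ be an $(\alpha, \beta)$-regular set in ${\rm CayS}(G, S)$ amounts to $|S \cap H| = \alpha$ and $|S \cap Ha^k| = \beta$ for every $k \in \{1, \dots, t-1\}$, so the problem reduces to classifying unions $S$ of such conjugacy classes satisfying these equations. From Lemma \ref{3.1}, the available classes are $b^G$, $(ab)^G$, the rotation pairs $\{a^i, a^{-i}\}$ with odd $i \in [1, m-1]$, and (only when $m$ is odd) the singleton $\{a^m\}$. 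For each coset $Ha^k = \{a^{rt+k} : r\} \cup \{a^{rt+s-k} b : r\}$, I would compute the sizes $|b^G \cap Ha^k|$, $|(ab)^G \cap Ha^k|$, and the number of non-square rotations in $Ha^k$. The decisive dichotomy is on the parity of $t$: for odd $t$ each reflection class meets every coset in exactly $m/t$ elements, independently of $k$, while for even $t$ all reflections of $Ha^k$ sit in a single reflection class (dictated by the parity of $s+k$) and the rotations of $Ha^k$ are all squares when $k$ is even and all non-squares when $k$ is odd.

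In case 3 ($t > 2$ even), equating $|S \cap H|$ with $|S \cap Ha^k|$ for both an even $k \neq 0$ and an odd $k$ forces $b^G \subseteq S$ (giving $\alpha = |H|/2$) and then forces the odd-coset contribution from $(ab)^G$ plus odd rotations to equal $|H|/2$; a short case check --- ruling out $\{a^m\}$ when $m$ is odd, since it then lies in a single odd coset and would break constancy --- leaves only $(\alpha, \beta) = (|H|/2, |H|/2)$. In cases 1 and 2 ($t = 2$), there is a unique non-trivial coset $Ha$, so the two equations decouple: $\alpha \in \{0, m\}$ depending on whether $S$ contains the reflection class of $H$, while $\beta = m\epsilon + 2p + \epsilon'$ (where $\epsilon, \epsilon' \in \{0,1\}$ and $p$ counts the odd rotation pairs included, with $\epsilon' = 0$ when $m$ is even since $a^m$ is then a square) ranges over all of $[0, |H|]$ when $m$ is odd and over the even integers in $[0, |H|]$ when $m$ is even, yielding items (1) and (2).

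Cases 4 and 5 ($t$ odd, $t > 1$, $t \mid m$) are the most involved. I would partition the odd rotation pairs into type (a) --- those $\{a^i, a^{-i}\}$ with $i$ an odd multiple of $t$ in $[1, m-1]$, each contributing $2$ to $H$ only --- and type (b) --- the rest, each contributing $1$ to $Ha^{i \bmod t}$ and $1$ to $Ha^{-i \bmod t}$. Writing $c$ for the number of reflection classes of $G$ among $\{b^G, (ab)^G\}$ that are included in $S$, one then has $\alpha = c \cdot (m/t) + r_0$ and $\beta = c \cdot (m/t) + r$, where $r$ is the common type (b) contribution to each non-trivial coset and $r_0$ is the contribution to $H$ from type (a) pairs and $\{a^m\}$. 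The combinatorial core is that the type (b) pairs group into $(t-1)/2$ classes indexed by the coset-pairs $\{k, t-k\}$, each of size exactly $m/t$, so selecting $r$ pairs from each class realises any $r \in [0, m/t]$ uniformly across non-trivial cosets; $r_0$ then ranges over $[0, m/t]$ in case 4 (with $\{a^m\}$ as a parity adjustor) and over the even integers in $[0, m/t]$ in case 5 (where $\{a^m\}$ is unavailable because $m$ is even). The independence of $(c, r, r_0)$ delivers the parametrizations of items (4) and (5).

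The main obstacle will be the combinatorial claim that each coset-pair $\{k, t-k\}$ contains exactly $m/t$ type (b) pairs when $t$ is odd --- this identity simultaneously guarantees realizability of every $r \in [0, m/t]$ and the bound $r \leq m/t$. I expect it to follow from the observation that for odd $t$, odd integers in $[1, 2m-1]$ distribute equally among residue classes modulo $t$ (parities alternate in arithmetic progressions with odd common difference), combined with the involution $i \mapsto 2m - i$ which pairs each odd $i \in [1, m-1]$ of residue $k$ with an odd $2m - i \in [m+1, 2m-1]$ of residue $-k$.
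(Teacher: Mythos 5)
Your proposal is correct and follows essentially the same route as the paper: both reduce the problem to choosing a union of non-square conjugacy classes and computing the intersection numbers of each class with each coset $Ha^k$ (your class-coset counts are exactly the paper's Claims 1--3, and your uniform-distribution claim for the ``type (b)'' rotation pairs is equivalent to the paper's construction of the transversals $\mathfrak{S}_i$). Your $(c,r,r_0)$ parametrization merely streamlines the paper's case analysis on $S\cap(b^G\cup(ab)^G)$ without changing the underlying argument.
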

\begin{proof} 

  For getting the result, first we assume that $H$ is an   $(\alpha, \beta)$-regular set   in  {\rm CayS}$(G,S)$,  for some  subset  $\emptyset\not =S\subseteq G$ and  we find some restrictions on   $(\alpha, \beta)$. Then for each $(\alpha, \beta)$ satisfying those restrictions we give a normal square-free subset $S$ such that   $H$ is an   $(\alpha, \beta)$-regular set   in  {\rm CayS}$(G,S)$.

 By the assumption,    $H=\{a^{rt}, a^{rt+s}b \ |\   \text{where $1\leq r\leq n/t$} \}$ is isomorphic to $D_{2n/t}$ or $C_2\times C_2$. 
We consider the following cases:

{\bf Case 1.} Let $t$ be even. 

{\bf Subcase 1.1.} Let $t=2$. Then $[G : H]=2$. Thus,  $H$  is normal in $G$ and so by the proof of  Lemma \ref{normal}, we get that $H$ is an $(\alpha,0)$-regular set in ${\rm CayS}(G, S\cap H)$ and  $H$ is a $(0, \beta)$-regular set in ${\rm CayS}(G, S\cap (G\setminus H))$.    

First, we find the possibilities for $\alpha$ and $\beta$. 
Note that, $H=\left\langle a^{t}, a^{s}b \right\rangle$ contains $(a^{s}b)^G$, which is the only normal square-free subset of $G$ contained in $H$, and $|(a^{s}b)^G|=|H|/2$. Then $S'=S\cap H$ either is the empty-set or is equal to $(a^{s}b)^G$ and so $\alpha=|S\cap H|\in \{0, |H|/2\}$. 
Also,   as $|G:H|=2$, we have   $0\leq \beta=|S\cap Hx|=|S\cap (G\setminus H)|\leq |G\setminus H|=|H|$, for every $x\in G\setminus H$.

 In the sequel, we show that    if  $m$ is odd, then for each $\beta\in \{0,\dots, |H| \}$, there exists  a  normal square-free subset of  size $\beta$   in $G\setminus H$,   and if $m$ is even, then for each even integer  $\beta\in \{0,\dots,|H|\}$  there exists  a  normal square-free subset of  size $\beta$   in $G\setminus H$. In both cases, we denote them  by $S_{\beta}$.  

$\bullet$
Assume that $m$ is odd. Then  the  square-free conjugacy classes of $G$ contained in  $G\setminus H$ are as follows:  
 $\{a^m\}$, $(a^{s+1}b)^G$ (of size $|H|/2$) and $\mathcal{S}_{i}=\{a^{2i+1}, a^{-(2i+1)}\}$, where $0 \le i < (|H|/2-1)/2$. 
 If $0\leq \beta \le|H|/2$ is odd, then  let  $S_{\beta}=\{a^m\} \cup \Big( \bigcup \limits_{i=1}^{(\beta-1)/2} \mathcal{S}_i \Big)$.   If $0\leq \beta< |H|/2$ is even, then  let $ S_{\beta}= \bigcup \limits_{i=1}^{\beta/2} \mathcal{S}_i$. Now, let $\beta > |H|/2$ and $\gamma = \beta -|H|/2$.  If $\gamma$ is even, then  let $S_{\beta}=  (a^{s+1}b)^G \cup  \Big(\bigcup \limits_{i=1}^{\gamma/2} \mathcal{S}_i\Big)$. If $\gamma$ is odd, then  let $S_{\beta}= \{a^m\} \cup (a^{s+1}b)^G \cup  \Big(\bigcup \limits_{i=1}^{(\gamma-1)/2} \mathcal{S}_i\Big)$. 

 Let    $0\leq \beta\leq |H|$. So, in this case $H$ is  a $(0, \beta)$-regular set  in ${\rm  CayS}(G, S_{\beta})$ and  $H$ is  a  $(|H|/2, \beta)$-regular  set  in ${\rm  CayS}(G, (ab)^G\cup S_{\beta})$, (except for  $(\alpha, \beta)=(0,0)$), which is Part (1) of the statement of the theorem.

 $\bullet$
Suppose that $m$ is even. Then $a^m\in H$ and  the  square-free conjugacy classes of $G$ contained in  $G\setminus H$ are as follows:  
 $(a^{s+1}b)^G$ (of size $|H|/2$) and $\mathcal{S}_{i}=\{a^{2i+1}, a^{-(2i+1)}\}$, where $0 \le i < (|H|/2-1)/2$. As  $S$ is a union of some  square-free conjugacy classes of $G$ and all conjugacy classes of $G \setminus H$ have even size,  $\beta=|S \cap (G\setminus H)|$ is even. Hence, by similar discussion as above for every even number $0\leq \beta \leq |H|$ we can construct a normal square-free subset $S_{\beta}$ of size $\beta$.  

  Let    $0\leq \beta\leq |H|$ be even. So, in this case $H$ is  a $(0, \beta)$-regular set  in ${\rm  CayS}(G, S_{\beta})$ and  $H$ is  a  $(|H|/2, \beta)$-regular  set  in ${\rm  CayS}(G, (ab)^G\cup S_{\beta})$, (except for  $(\alpha, \beta)=(0,0)$), which is Part (2) of the statement of the theorem.


{\bf Subcase 1.2} Let $t\geq 4$.  We note that $Ha^{k}=\{a^{rt+k}, a^{s+rt-k}b \ :\ \text{where $1\leq r\leq n/t$}\}$. In this case, $|$NSq$(G)\cap Ha^{2l}|=|H|/2$, $|$NSq$(G)\cap Ha^{2l+1}|=|H|$, where $0 \leq l \leq m-1$. Since $Ha^rb=Ha^{s-r}$, for each $0\leq r\leq n-1$, we get that $\mathcal{L}(H)=|H|/2$. Thus, according to Theorem \ref{first}, $0\leq \alpha\leq |H|/2$  and   $0\leq \beta\leq |H|/2$. In the sequel,  we prove that the only possibility is $\alpha=\beta=|H|/2$.

Note that  ${\rm NSq}(G)\cap Ha^{2l}\subseteq (a^sb)^G$ and  if  $\beta=0$, then $S \cap Ha^{2} = \emptyset $.  By the normality of $S$,   we have  $S \cap (a^{s}b)^G = \emptyset$. So $S \cap H= \emptyset$, implies that $\alpha=0$, which is excluded, as $S\not =\emptyset$ . 

Let $\beta=|S\cap Ha^{2}|\geq 1$. As  NSq$(G)\cap Ha^{2} \subseteq (a^{s}b)^G$,   by the normality of $S$, we conclude that $  (a^{s}b)^G \subseteq S$. On the other hand,  ${\rm NSq(G)}\cap H\subseteq (a^sb)^G\subseteq S$. Thus, $\alpha=|S\cap H|= |{\rm NSq}(G) \cap H|=|H|/2$ and $\beta=|{\rm NSq}(G)\cap Ha^{2}|=|H|/2$. So the only possibility is $\alpha=\beta=|H|/2$.   On the other hand,  taking   $S=(a^{s}b)^G \cup (a^{s+1}b)^G=b^G\cup (ab)^G$,  leads to the conclusion that $|S\cap Ha^{k}|=|H|/2$, for each $0\leq k\leq n-1$  and so $H$ is a $(|H|/2, |H|/2)$-regular set in ${\rm CayS}(G,S)$, which is Part (3) of the statement of the theorem.


{\bf  Case 2.} Let $t$ be odd. Thus, $${\rm NSq}(G)\cap H=\{a^{kt}, a^{-kt}: \text{ where $1\leq k\leq  m/t$ is odd} \} \cup ( b^G\cap H) \cup ( (ab)^{G}\cap H). $$ 

  In Subcases 2.1 and 2.2, we will determine $(\alpha, \beta)$,  when $m$ is even and odd, respectively. Before that,  we prove the following claims, in order to make the subcases easier to follow.    Let $A_{i}=\{a^{it}b, a^{it+1}b, \dots, a^{(i+1)t-1}b\}$, where $0 \leq i \leq (2m/t)-1$.  

 {\bf Claim 1:} {\it  $ |b^{G}\cap H|= | (ab)^{G}\cap H|=m/t$. }

$\bullet $ Let $s$ be odd. Then $rt+s$ is even if and only if $r$ is odd; and $rt+s$ is odd if and only if $r$ is even. Thus by Lemma \ref{3.1}, $ b^{G}\cap H =\{a^{(2i+1)t+s}b \ : \  0\leq i\leq (m/t)-1\} \subseteq \bigcup\limits_{i=0}^{(m/t)-1} A_{2i+1}$ and $ (ab)^G\cap H=\{a^{2it+s}b \ : \  0\leq i\leq (m/t)-1\} \subseteq \bigcup\limits_{i=0}^{(m/t)-1} A_{2i}$. We note that, $ b^G  \cap A_{2i+1}\cap H=\{a^{(2i+1)t+s}b\}$ and $ (ab)^G \cap A_{2i}\cap H=\{a^{2it+s}b\}$, for each $i$. 
	 
 $\bullet$ Now, let  $s$  be even.  Then,   similarly,  $ b^{G}\cap H \subseteq \bigcup\limits_{i=0}^{(m/t)-1} A_{2i}$, $(ab)^{G} \cap H\subseteq \bigcup\limits_{i=0}^{(m/t)-1} A_{2i+1}$, $b^G \cap A_{2i}\cap H=\{a^{2it+s}b\}$ and $ (ab)^G \cap A_{2i+1}\cap H=\{a^{(2i+1)t+s}b\}$,  for $0 \leq i \leq (m/t)-1$. 

Thus, in both cases,  $ |b^G\cap H|=| (ab)^G\cap H|=m/t$, as claimed.  $\Box$
 
 {\bf Claim 2:}  {\it  $|b^G \cap Hx |=|(ab)^G \cap Hx|=m/t$, for every $x \in G \setminus H$.}

 To prove our claim we just need to show that $b^G \cap (G\setminus H) =\bigcup\limits_{j=1}^{m/t} T_j$ and $(ab)^G \cap (G\setminus H) =\bigcup\limits_{j=1}^{m/t} {\mathcal{T}}_j$, for some square-free pairwise disjoint subsets $T_j$ and $\mathcal{T}_j$ such that for each $1\leq j\leq m/t$, $T_j\cup \{e\}$ and $\mathcal{T}_j\cup \{e\}$ are right transversals of $H$ in $G$. 
In the sequal of this paragraph,  we take $i$ to be an integer in  $\{0,\dots, (m/t)-1\}$.  Remind that   $H=\{a^{rt}, a^{rt+s}b \ |\   \text{where $1\leq r\leq n/t$} \}$.  As  $|H\cap A_i|=1 $, for each $0\leq i\leq (2m/t)-1$,  we have $|(A_{2i}\cup A_{2i+1})\setminus H|=2(t-1)$ and so $|(A_{2i}\cup A_{2i+1}) \cap b^{G} \setminus H|=|(A_{2i}\cup A_{2i+1}) \cap (ab)^{G} \setminus H|=t-1$. Set $T_{i+1}= (A_{2i}\cup A_{2i+1})\cap b^G\setminus H$  and $\mathcal{T}_{i+1}= (A_{2i}\cup A_{2i+1})\cap (ab)^G\setminus H$. Then it is enough to prove that if $x_1, x_2\in T_{i+1}$ or $x_1, x_2\in \mathcal{T}_{i+1}$ are distinct elements, then $Hx_1\not =Hx_2$. On the contrary, assume that $Hx_1=Hx_2$. If $x_1, x_2\in A_{k}$, where $k\in \{2i, 2i+1\}$, then $x_1=a^{kt+r_1}b$ and $x_2=a^{kt+r_2}b$, for some $0\leq r_1< r_2\leq t-1$. Thus $a^{r_2-r_1}\in H$, which is not possible. If $x_1\in A_{2i}$ and $x_2\in A_{2i+1}$, then $x_1=a^{2it+r_1}b$ and $x_2=a^{(2i+1)t+r_2}b$, for some $0\leq r_1, r_2\leq t-1$. Hence, $a^{t+r_2-r_1}\in H$, which means that $a^{r_2-r_1}\in H$. It follows that $r_1=r_2$.  On the other hand, the parities  of $2it+r_1$ and $(2i+1)t+r_1$ are  the same,  as $a^{2it+r_1}b$ and $a^{(2i+1)t+r_1}b$ both are in the same conjugacy class $b^G$ or $(ab)^G$. This implies that   the parities of $r_1$ and $t+r_1$ are  the same.  Thus, $t$ is even, a contradiction.  So the proof is complete.  $\Box$

{\it {\bf Claim 3:}    For each $1 \leq \beta \leq m/t$, let  $\Omega_{\beta}=\bigcup\limits_{i=0}^{\beta-1}\mathfrak{S}_{i}$, where   $\mathfrak{S}_i= \{a^{j},a^{-j}: it+1 \leq j \leq (i+1)t-1,  \text{ where} \ j$  is odd$\}$.  
Then,     $H$ is a   $(0,\beta)$-regular  set in {\rm CayS}$(G, \Omega_{\beta})$.}

 First, we prove that $\mathfrak{S}_i\cup\{e\}$ is a right transversal of $H$ in $G$, for $0\leq i\leq m/t-1$.  We  show that  for any pair $ it+1 \leq j_1 \leq j_2 \leq (i+1)t-1$,  we have  $Ha^{j_1}\not  =Ha^{- j_2}$, and if $Ha^{j_1}=Ha^{j_2}$, then  $j_1=j_2$. If $Ha^{j_1} =Ha^{- j_2}$, then  $a^{j_1+j_2}\in H$ and so  ${j_1+j_2}=kt $,  for some integer $k$.   On the other hand, $j_1+j_2$ is even and $t$ is odd, implying that $t$ divides $(j_1+j_2)/2$, a contradiction,  as  $j_1/2 ,   j_2/2\in [(it+1)/2, ((i+1)t-1)/2] $.  If  $Ha^{j_1}=Ha^{j_2}$, then $t$ divides $j_1-j_2$, implying that $j_1=j_2$. 
 Also $Hx\not =H$, for $ x\in \mathfrak{S}_i$.  Therefore, as $|\mathfrak{S}_i|=t-1$ we conclude that   $\mathfrak{S}_i\cup \{e\}$ is a right transversal of $H$ in $G$.  Thus, as $\mathfrak{S}_i$  is a normal square-free subset of $G$, by  Lemma \ref{1}, 
 $H$ is a $(0,\beta)$-regular set in CayS$(G,\Omega_{\beta})$, for all $0\leq \beta\leq m/t$, as wanted. $\Box$

In the following subcases, we use the notation $\Omega_{\beta}$ which is defined in  Claim 3, without further references.  Also, by $\Omega_0$ we mean the empty set. 

{\bf Subcase 2.1.} Let $m$ be odd. Now we consider the following cases for $S$.

$\bullet$
Suppose that $S \cap (b^{G} \cup (ab)^{G})= \emptyset $.  Therefore,  $S\subseteq T=\{a^m\}\cup \left(\bigcup\limits_{i=0}^{(m-3)/2} \{a^{(2i+1)}, a^{-(2i+1)}\} \right)=\{a^m\}\cup \Omega_{m/t}\cup \left(\bigcup\limits _{j=0}^{((m/t)-3)/2} \{a^{t(2j+1)}, a^{-t(2j+1)}\} \right)$.   Remind that   $\{a^m\}$ and $\{a^{tk}, a^{-tk}\}$, for every odd integer  $k< m/t$, are square-free conjugacy classes of $G$ contained in $H$.   Then $0\leq \alpha=|S\cap H|\leq  |T\cap H|=\left|\{a^m\}\cup \left(\bigcup\limits _{j=0}^{((m/t)-3)/2} \{a^{t(2j+1)}, a^{-t(2j+1)}\} \right)\right|=m/t$ and $0\leq \beta=|S\cap Hx|\leq |\Omega_{m/t}\cap Hx|=m/t $, for each $x\in G\setminus H$,  by Claim 3.

Now, we show that, in this case,   for each  $\alpha, \beta\in \{0,\dots,m/t\}$, there exists  a normal square-free subset  $S$ such  that  $H$ is an $(\alpha, \beta)$-regular set in ${\rm CayS}(G,S)$.   If $\alpha$ is an odd number less than  or equal to $m/t$,  then by setting $S_{\alpha}= \{a^m\} \cup \left(\bigcup\limits _{j=0}^{(\alpha-3)/2} \{a^{t(2j+1)}, a^{-t(2j+1)}\} \right)$, we get  that  $H$ is  an $(\alpha, 0)$-regular set in  CayS$(G,S_{\alpha})$. Similarly,  if $\alpha$ is an  even number less than $m/t$, then set $S_{\alpha}= \bigcup\limits_{j=0}^{(\alpha-2)/2} \{a^{t(2j+1)}, a^{-t(2j+1)}\} $.  Now, let  $S=S_{\alpha}\cup \Omega_{\beta}$.  If $S\not =\emptyset$, then  $H$ is an $(\alpha, \beta)$-regular set in ${\rm CayS}(G,S)$.


$\bullet$
 Let $b^G \subseteq S$ and $ S \cap (ab)^G = \emptyset$. Therefore,  $b^G\subseteq S\subseteq T= b^G\cup \{a^m\}\cup \left(\bigcup\limits_{i=0}^{(m-3)/2} \{a^{(2i+1)}, a^{-(2i+1)}\} \right)=b^G\cup \{a^m\}\cup \Omega_{m/t}\cup  \left(\bigcup\limits_{j=0}^{((m/t)-3)/2} \{a^{t(2j+1)}, a^{-t(2j+1)}\} \right)$. By Claim 1, $m/t=| b^G\cap H| \leq | S\cap H|=\alpha\leq \left|\left( b^G\cup \{a^m\}\cup \left(\bigcup\limits _{j=0}^{((m/t)-3)/2} \{a^{t(2j+1)}, a^{-t(2j+1)}\} \right)\right) \cap H\right|=2m/t $ and by Claims  2 and 3,  $m/t=| b^G \cap Hx| \leq |S \cap Hx|= \beta\leq |(\Omega_{m/t}\cup b^G)\cap Hx|=2m/t$, for $x \in G \setminus H$. 

Now,  for each $ m/t \leq \alpha, \beta  \leq 2m/t$, we  give a normal square-free subset $S$  of $G$ (with given  condition)  such that $H$ is  an $(\alpha,\beta)$-regular set in ${\rm CayS}(G, S)$.  If $\alpha -(m/t)$ is odd, then let 

$$S=\left(\bigcup\limits_{j=0}^{(\alpha-(m/t)-3)/2} \{a^{t(2j+1)}, a^{-t(2j+1)}\}\right)\cup \{a^m\}\cup b^G\cup \Omega_{\beta-(m/t)}.$$ 
 If $\alpha -(m/t)$ is even, then let 
$$S=\left(\bigcup\limits_{j=0}^{(\alpha-(m/t)-2)/2} \{a^{t(2j+1)}, a^{-t(2j+1)}\}\right)\cup b^{G} \cup \Omega_{\beta-(m/t)} .$$
 By Claims 1,2,3, we have  $| S\cap H|=\alpha$ and $|S \cap Hx|=\beta$, for each $x\in G\setminus H$. 


$\bullet$
If $(ab)^G \subseteq S$ and $S \cap b^G = \emptyset$, we have the same possibilities  for $\alpha$ and $\beta$ as the above case.  

$\bullet$
Suppose $b^G\cup (ab)^G\subseteq S$. Thus,  

\begin{eqnarray*}
b^G\cup (ab)^G\subseteq S\subseteq T=b^G\cup (ab)^G\cup  \{a^m\}\cup \left(\bigcup_{i=0}^{(m-3)/2} \{a^{(2i+1)}, a^{-(2i+1)}\} \right) \\
=  b^G\cup (ab)^G\cup \{a^m\}\cup \Omega_{m/t}\cup  \left(\bigcup_{j=0}^{((m/t)-3)/2} \{a^{t(2j+1)}, a^{-t(2j+1)}\} \right).
\end{eqnarray*}
Similarly to the above, by  Claims 1,2,3,   we have $2m/t\leq \alpha , \beta\leq 3m/t$. Now,  for each  $2m/t \leq \alpha , \beta \leq 3m/t$, we give   a normal square-free subset $S$ of $G$ such that $H$ is an  $(\alpha,\beta)$-regular set  in ${\rm CayS}(G,S)$. If $\alpha -(2m/t)$ is odd, then let $S$ be the union of $(\alpha-(2m/t)-1)/2$ sets of type $\{a^{t(2j+1)},a^{-t(2j+1)}\}$ with $\Omega_{\beta-(2m/t)} \cup b^{G} \cup (ab)^{G} \cup \{a^{m}\}$. If $\alpha -(2m/t)$ is even, then let $S$ be the union of $(\alpha-(2m/t))/2$ sets of type $\{a^{-t(2j+1)},a^{t(2j+1)}\}$ with $\Omega_{\beta-(2m/t)} \cup b^{G} \cup (ab)^{G}$, as wanted. 

Therefore, we see that  in the case that $t$ and $m$ are odd, all the possibilities for $(\alpha, \beta)$ are as described in Part (4) of the statement of the theorem.

{\bf Subcase 2.2} Let $m$ be even. Similarly to the above we consider the following cases for $S$. 

Assume that $S \cap (b^{G} \cup (ab)^{G})= \emptyset $. Thus, $$S \subseteq \bigcup\limits_{i=0}^{m/2-1} \{a^{(2i+1)}, a^{-(2i+1)}\}=\Omega_{m/t}\cup \left( \bigcup\limits_{j=0}^{(m/(2t))-1} \{a^{t(2j+1)}, a^{-t(2j+1)}\} \right),$$
As $H\cap \Omega_{m/t}=\emptyset$,  we deduce  that $0 \leq \alpha=|S\cap H|\leq m/t $ is even and $0\leq \beta=|S\cap Hx|\leq |\Omega_{m/t}\cap Hx|=m/t$, by Claim 3, for $x\in G\setminus H$.   Hence, setting $S= \Omega_\beta \cup \left( \bigcup\limits_{j=0}^{\alpha/2} \{a^{t(2j+1)}, a^{-t(2j+1)}\} \right)$, $H$ is an $(\alpha, \beta)$-regular set of $G$, for  all $0\leq \beta\leq m/t$ and each  even numbers $0\leq \alpha\leq m/t$.

 Now, assume that $S \cap (b^{G} \cup (ab)^{G}) \ne \emptyset $. First, suppose that $S$ contains exactly one of them. Without loss of generality, suppose that $b^{G} \subseteq S$.
Therefore,  $b^G\subseteq S\subseteq T= b^G\cup  \left(\bigcup\limits_{i=0}^{(m-2)/2} \{a^{(2i+1)}, a^{-(2i+1)}\} \right)=b^G\cup \Omega_{m/t}\cup  \left(\bigcup\limits_{j=0}^{((m/t)-2)/2} \{a^{t(2j+1)}, a^{-t(2j+1)}\} \right)$.
 By Claim 1, $m/t=| b^G\cap H| \leq | S\cap H|=\alpha\leq \left|  \left( b^G\cup  \left(\bigcup\limits_{j=0}^{((m/t)-2)/2} \{a^{t(2j+1)}, a^{-t(2j+1)}\} \right)\right)\cap H \right|=2m/t $   and $\alpha$ is even.  Also,  by Claims  2 and 3, $m/t=| b^G \cap Hx| \leq |S \cap Hx|= \beta\leq |(b^G\cup \Omega_{m/t} )\cap Hx|=2m/t$, for $x \in G \setminus H$.


Now,  for each $ m/t \leq \alpha, \beta  \leq 2m/t$, where $\alpha$ is even, we  give a normal square-free subset $S$  of $G$ (with given  condition)  such that $H$ is  an $(\alpha,\beta)$-regular set in ${\rm CayS}(G, S)$.  
 Note that, as $\alpha$ is even in this case, we have  $\alpha-(m/t)$  is  even. 
Let $S$  be a union of $(\alpha-(m/t))/2$   conjugacy classes of type $\{a^{t(2j+1)}, a^{-t(2j+1)}\}$ with $b^G\cup \Omega_{\beta-(m/t)}$.  Hence, $H$ is an $(\alpha, \beta)$-regular  set of $G $.   For the case that $(ab)^G\subseteq S$ and $b^G\cap S=\emptyset$, we have the same possibilities for $(\alpha, \beta)$.   

Let  $b^G\cup (ab)^G\subseteq S$.  Thus

\begin{eqnarray*}
b^G\cup (ab)^G\subseteq S\subseteq T=b^G\cup (ab)^G\cup  \{a^m\}\cup \left(\bigcup_{i=0}^{(m-2)/2} \{a^{(2i+1)}, a^{-(2i+1)}\} \right) \\
=  b^G\cup (ab)^G\cup \{a^m\}\cup \Omega_{m/t}\cup  \left(\bigcup_{j=0}^{((m/t)-2)/2} \{a^{t(2j+1)}, a^{-t(2j+1)}\} \right).
\end{eqnarray*}

Similarly to the above, by  Claims 1,2,3,   we have $2m/t\leq \alpha , \beta\leq 3m/t$, where $\alpha$ is even. Now,  for each  $2m/t \leq \alpha , \beta \leq 3m/t$,  where $\alpha$ is even, we give   a normal square-free subset $S$ of $G$ such that $H$ is an  $(\alpha,\beta)$-regular set  in ${\rm CayS}(G,S)$.  Noting  that  $\alpha -(2m/t)$ is even,  let $S$ be the union of $(\alpha-(2m/t))/2$  conjugacy classes of type $\{a^{-t(2j+1)},a^{t(2j+1)}\}$ with $\Omega_{\beta-(2m/t)} \cup b^{G} \cup (ab)^{G}$, as wanted. 

Therefore, we see that  in the case that $t$  is odd and $m$ is even, all the possibilities for $(\alpha, \beta)$ are as described in Part (5) of the statement of the theorem. 
\end{proof}

\begin{theorem}
Let  $n\geq 3$ be odd and  $t$ be a divisor of $n$. Let   $H=\left\langle a^{t} \right\rangle$ be a subgroup of $G=D_{2n}$.  Then $H$ is an $(\alpha, \beta)$-regular set  of $G$  if and only if   $t=1$ and $(\alpha, \beta)=(0, |H|)$.

\end{theorem}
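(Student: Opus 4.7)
The plan is to first pin down $\alpha$ by showing $H$ consists entirely of squares, then analyse the possible values of $\beta$ via the coset structure of $H$ in $G$.

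First I would observe that since $n$ is odd, the squaring map on $\langle a\rangle$ is a bijection: for any $0\leq j\leq n-1$ there is a unique $k$ with $2k\equiv j\pmod{n}$, so $a^j=(a^k)^2$. Hence $\langle a\rangle\subseteq {\rm Sq}(G)$, and in particular $H=\langle a^t\rangle\subseteq{\rm Sq}(G)$, giving ${\rm NSq}(G)\cap H=\emptyset$. By Theorem \ref{first}(1), this forces $\alpha=0$ whenever $H$ is an $(\alpha,\beta)$-regular set of $G$.

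Next I would split on whether $t>1$ or $t=1$. If $t>1$, the coset $Ha^k$ for any $1\leq k\leq t-1$ equals $\{a^{rt+k}:0\leq r\leq n/t-1\}\subseteq\langle a\rangle\subseteq{\rm Sq}(G)$, so ${\rm NSq}(G)\cap Ha^k=\emptyset$ and therefore $\mathcal{L}(H)=0$. Theorem \ref{first}(1) then forces $\beta=0$ as well, yielding the excluded pair $(0,0)$. Hence no such $(\alpha,\beta)\ne(0,0)$ exists when $t>1$.

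If $t=1$, then $H=\langle a\rangle$ has index $2$ in $G$, and the unique nontrivial coset is $Hb=\{a^jb:0\leq j\leq n-1\}$, which by Lemma \ref{odd n} is exactly the conjugacy class $b^G$. Any admissible connection set $S$ is a normal square-free subset of $G$; since the only non-identity conjugacy classes of $G$ are $b^G$ and the pairs $\{a^i,a^{-i}\}$ (which lie in $\langle a\rangle\subseteq{\rm Sq}(G)$ and so cannot appear in any square-free subset), the only nonempty possibility is $S=b^G$. This gives $\alpha=|S\cap H|=0$ and, for every $x\in G\setminus H$, $\beta=|S\cap Hx|=|b^G|=n=|H|$, so the only admissible pair is $(\alpha,\beta)=(0,|H|)$.

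For the converse direction, I would simply verify that with $t=1$ and $S=b^G$, the set $H$ is indeed a $(0,|H|)$-regular set of ${\rm CayS}(G,S)$: for $h_1,h_2\in H=\langle a\rangle$ we have $h_1h_2\in\langle a\rangle$, which is disjoint from $b^G$, so $H$ induces an empty subgraph; and for any $x=a^kb\in Hb$ and $h=a^r\in H$ one has $hx=a^{r+k}b\in b^G=S$, so every vertex outside $H$ is adjacent to all $|H|$ elements of $H$. The main (and essentially only) conceptual step is recognising that oddness of $n$ collapses ${\rm Sq}(G)$ to contain all of $\langle a\rangle$, which simultaneously kills $\alpha$ and, when $t>1$, kills $\mathcal{L}(H)$; the rest is bookkeeping with conjugacy classes via Lemma \ref{odd n}.
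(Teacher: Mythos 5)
Your proposal is correct and follows essentially the same route as the paper: observe that ${\rm NSq}(G)=b^G$ for odd $n$, so the only nonempty admissible connection set is $S=b^G$, forcing $\alpha=0$; then note that for $t>1$ the coset $Ha$ lies in ${\rm Sq}(G)$ so $\mathcal{L}(H)=0$ and $\beta=0$ is excluded, while for $t=1$ one gets $\beta=|b^G|=|H|$. Your write-up is somewhat more explicit than the paper's (which, incidentally, contains an apparent typo, concluding ``$\beta=|H|/2$'' where $|H|$ is meant), but the argument is the same.
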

\begin{proof}
Noting that NSq$(G)=b^G$ and,  as  the connection set $S\not = \emptyset$, we get that $S=b^G$. So $\alpha=|S\cap H|=0$.   If $t>1$, then $\beta\leq\mathcal{L}(H)\leq  |$NSq$(G)\cap Ha|=0$, which is excluded.  Thus, $t=1$ and $\beta=|H|/2$, as desired.  
\end{proof}

\begin{theorem}
Let     $n = 2m$, for some positive integer $m \geq 2$ and $t$ be a  divisor of $n$.  Let $H=\left\langle a^{t} \right\rangle$ be a subgroup of $G=D_{2n}$. Then $H$ is an $(\alpha, \beta)$-regular set of  $G$, where $(\alpha, \beta)\not=(0,0)$,   if and only if one of the following occurs:

1)   $(\alpha, \beta)= (0,|H|)$,  when  $t=2$.

2) $0\leq \alpha \leq  |H|/2$ and $\beta\in \{0, |H|/2, |H|\}$, when $t=1$ and $m$ is odd.

3) $\alpha=2\gamma $,  for some $0\leq \gamma\leq  |H|/4$,  and $\beta\in \{0, |H|/2, |H|\}$, when $t=1$ and $m$ is even.

4)  $0\leq \alpha \leq  |H|/2$ and $\beta\in \{0, |H|/2\}$,  when $t>1$   and $m$ are   odd.

5)  $\alpha=2\gamma $,  for some $0\leq \gamma\leq  |H|/4$,  and $\beta\in \{0, |H|/2\}$, when $t> 1$ is odd and $m$ is even.
\end{theorem}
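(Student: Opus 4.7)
The plan is to mirror the case analysis used for Theorem~\ref{3.5}, adapted to the cyclic subgroup $H = \langle a^t\rangle$ of order $|H| = 2m/t$ and index $[G:H] = 2t$. The key simplification here is that $H$ contains no reflection $a^ib$, so the cosets in $G \setminus H$ split cleanly into rotation cosets $Ha^k$ and reflection cosets $Ha^k b$ for $0 \le k \le t-1$. The admissible normal square-free subsets of $G$ are built from the conjugacy classes $b^G$, $(ab)^G$, the singleton $\{a^m\}$ (only when $m$ is odd), and pairs $\{a^j, a^{-j}\}$ for odd $j$; I will classify which unions give rise to $(\alpha,\beta)$-regular sets by tracking how each such class distributes across the cosets of $H$.

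I start with the basic counts. Writing $H = \{a^{rt} : 0 \le r \le |H|-1\}$, one checks that $|\mathrm{NSq}(G) \cap H| = 0$ when $t$ is even and $|\mathrm{NSq}(G) \cap H| = m/t = |H|/2$ when $t$ is odd. For every reflection coset, $|\mathrm{NSq}(G) \cap Ha^k b| = |H|$, since every $a^i b$ is non-square. For rotation cosets, the congruence $rt + k \equiv r + k \pmod 2$ (valid when $t$ is odd) gives $|\mathrm{NSq}(G) \cap Ha^k| = m/t$ for $k = 1,\dots,t-1$; whereas for $t$ even, $|\mathrm{NSq}(G) \cap Ha^k| = |H|$ if $k$ is odd and $0$ if $k$ is even and nonzero. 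Plugging these into Theorem~\ref{first} determines $\mathcal{L}(H)$ and yields a priori bounds on $\alpha$ and $\beta$. In particular, when $t$ is even with $t > 2$ one has $\alpha = 0$ and $\mathcal{L}(H) = 0$ via the coset $Ha^2$, so only $(0,0)$ is possible and this case contributes nothing, which explains its absence from the statement.

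The remaining cases are then handled by counting contributions of square-free conjugacy classes to each coset. When $t = 2$ (Part~1), $Hb$ and $Hab$ coincide with $b^G$ and $(ab)^G$, respectively, so the constraint $|S \cap Hb| = |S \cap Hab|$ forces both or neither to lie in $S$; requiring in addition $|S \cap Ha| = |S \cap Hb|$ leaves only the solution $S = \mathrm{NSq}(G)$, yielding $(\alpha,\beta) = (0,|H|)$. When $t = 1$ (Parts~2 and 3), the only non-$H$ coset is $Hb$ and it receives $0$, $m$, or $2m$ elements according as zero, one, or both of $b^G, (ab)^G$ lie in $S$, so $\beta \in \{0, |H|/2, |H|\}$; meanwhile $\alpha$ is freely assembled from the classes $\{a^j, a^{-j}\}$ (odd $j$, contribution $2$) and $\{a^m\}$ (contribution $1$, available only when $m$ is odd), giving the parity constraints.

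The main obstacle is the case $t > 1$ odd (Parts~4 and 5). A parity-counting argument analogous to Claim~2 in the proof of Theorem~\ref{3.5} shows that both $b^G$ and $(ab)^G$ contribute exactly $m/t$ elements to each reflection coset $Ha^k b$, so $|S \cap Ha^k b| \in \{0, m/t, 2m/t\}$; since $\beta \le \mathcal{L}(H) = m/t$, this forces $\beta \in \{0, m/t\}$. The delicate step is showing that for $\beta = m/t$ \emph{every} rotation class $\{a^j, a^{-j}\}$ with $j$ odd and $t \nmid j$ must appear in $S$: each such class meets each of the paired cosets $Ha^k$ and $Ha^{t-k}$ in exactly one element, the total non-square content of $Ha^k$ is already $m/t$, and so the equation $|S \cap Ha^k| = m/t$ leaves no room for any omission. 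The admissible values of $\alpha$ are then read off from the square-free $G$-classes contained in $H$, namely the pairs $\{a^{kt}, a^{-kt}\}$ for odd $k$ (each contributing $2$) and $\{a^m\}$ (contributing $1$, present only when $m$ is odd), which splits into Part~4 (odd $m$, all $\alpha \in \{0,\dots,m/t\}$ attainable) and Part~5 (even $m$, only even $\alpha$ attainable, $\alpha = 2\gamma$ with $0 \le \gamma \le m/(2t)$). Explicit witnesses for each attainable pair $(\alpha,\beta)$ are produced by combining the relevant classes, completing the biconditional.
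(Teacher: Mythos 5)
Your proposal is correct and follows essentially the same route as the paper: compute the non-square content of each coset of $H=\langle a^t\rangle$, use normality of the connection set to force $\beta\in\{0,|H|/2,|H|\}$ (resp.\ $\{0,|H|/2\}$ for $t>1$ odd, and $(0,|H|)$ for $t=2$, nothing for even $t>2$), read off the parity constraint on $\alpha$ from the sizes of the square-free conjugacy classes inside $H$, and exhibit explicit witnesses. The only cosmetic difference is that the paper decouples $\alpha$ from $\beta$ by invoking the normality of $H$ (Lemma~\ref{normal}), whereas you obtain the same decoupling directly from the class-counting.
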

\begin{proof} 

Note that in this case $H$ is a  normal subgroup of $G$ and so by  the proof of Lemma \ref{normal}, $H$ is an $(\alpha, \beta)$-regular set in  ${\rm CayS}(G, S)$, for some subset $\emptyset\not =S\subseteq G$ if and only if  $H$ is an $(\alpha, 0)$-regular set  in   ${\rm CayS}(G, S\cap H)$ and $H$ is a $(0, \beta)$-regular set   in  ${\rm CayS}(G, S\cap (G\setminus H))$. 

 For getting the result, first we assume that $H$ is an   $(\alpha, \beta)$-regular set   in  {\rm CayS}$(G,S)$,  for some  subset  $\emptyset\not =S\subseteq G$ and  we find some restrictions on   $(\alpha, \beta)$. Then for each $(\alpha, \beta)$ satisfying those restrictions we give a normal square-free subset $S$ such that   $H$ is an   $(\alpha, \beta)$-regular set   in  {\rm CayS}$(G,S)$. 
We consider the following two cases: 
	 
{\bf Case 1.} Assume that $t$ is an even divisor of $n$. Thus, $H\subseteq {\rm Sq}(G)$  and  hence $\alpha=0$. Thus,  as $S\not =\emptyset$,  we must have $\beta\geq 1$. 

First, assume $t>2$. In this case, $a^2 \notin H$ and $Ha^2 \subseteq {\rm Sq}(G)$. So, by Theorem \ref{first},  $\beta\leq \mathcal{L}(H)\leq |$NSq$(G)\cap Ha^2|=0$. Therefore,  for even integer $t>2$, $H$ is not an  $(\alpha,\beta)$-regular set of $G$.   

Now, let $t=2$. Then $H=$ Sq$(G)$ and $[G : H]=4$. Therefore, $G\setminus H= Ha\cup Hb \cup Hab$, where $Hb=b^G$ and $Hab=(ab)^G$.  If $m$ is even, then $Ha=\{a^{2j+1},a^{-(2j+1)}: 0 \leq j \leq m-1\}$, and in case $m$ is odd,  $Ha=\{a^{2j+1},a^{-(2j+1)}: 0 \leq j \leq m-1\} \cup \{a^m\}$.  As $\beta\geq 1$,  we have  $S\cap Hb\not =\emptyset$ and $S\cap Hab\not =\emptyset$. Therefore, by the normality of $S$ we conclude that $Hab\cup Hb\subseteq S$ and since $|S \cap Hg|=\beta$, for each $g \in G \setminus H$, we get that $S=G\setminus H $ and so $\beta=|H|$. It follows that  the only possibility is $\beta=|H|$, as described in Part (1) of the theorem.


{\bf Case 2.} Assume that $t$ is an odd divisor of $n$. In this case, ${\rm NSq}(G)\cap H=  H\setminus \langle a^{2t}\rangle$. Then, $0 \leq \alpha=|S \cap H| \leq |{\rm NSq}(G)\cap H|=|H|/2$. 

 In the following  we discuss  the possibilities for $\alpha$ and for each  feasible $\alpha$ we  introduce a normal square-free  subset  of $H$ of size $\alpha$, say  $S_{\alpha}$.

$\bullet$
Let $m$ be odd. If $1\leq \alpha\leq |H|/2$ is odd, then  we set  $S_{\alpha}$  to be a union of $\{a^m\}$ with $(\alpha-1)/2$ sets of type $\{a^{kt}, a^{-kt}\}$, where   $k< m/t$ is odd.   If   $0\leq \alpha\leq |H|/2$ is even, then   we set $S_{\alpha}$ to be a union of  $\alpha/2$ sets of type $\{a^{kt}, a^{-kt}\}$,  where   $k< m/t$ is odd.

$\bullet $ Let   $m$ be even. Then $\alpha$ is even, as $S \cap H\subseteq \bigcup\limits_{i=0}^{((m/t)-2)/2} \{a^{t(2i+1)}, a^{-t(2i+1)}\}$,   a union of some square-free conjugacy classes of size $2$. Let $S_{\alpha}$  be a union of  $\alpha/2$ sets of type $\{a^{kt}, a^{-kt}\}$,  where $k< m/t$ is odd.

Therefore $H$ is an $(\alpha,0)$-regular set in ${\rm CayS}(G,S_{\alpha})$. Now, in the  following  two cases we discuss  possibilities for  $\beta$  and in each case we give normal square-free subsets $S$ such that $H$ is an $(\alpha, \beta)$-regular set in ${\rm CayS}(G,S)$.

$\blacktriangleright$ Let $t=1$.

  Then $[G : H]=2$. Therefore, $G\setminus H=Hb=b^G \cup (ab)^G$. By the normality of $S$ we conclude that $S\cap (G\setminus H)$ can be equal to either $\emptyset$,  $b^G$,  $(ab)^G$ or $b^G \cup (ab)^G$. Hence, $\beta=|S\cap (G\setminus H)|\in \{0, |H|/2, |H|\}$. 

Thus, setting $S=S_{\alpha}$ (if $\alpha\geq 1$ and $\beta=0$), $S=S_{\alpha}\cup b^G$ or $S_{\alpha}\cup b^G\cup (ab)^G$   we have $H$ is an $(\alpha, \beta)$-regular set in ${\rm CayS}(G,S)$, where $0\leq \alpha\leq |H|/2$ and $\beta\in \{0, |H|/2, |H|\}$. Note that by previous discussion $\alpha$ must be even, when $m$ is even. Thus, we have  Parts (2) and (3) of the theorem.


$\blacktriangleright$  Let $t>1$.

Then,  ${\rm NSq}(G)\cap  Ha^ib=Ha^ib\subseteq b^G\cup (ab)^G$, $|(ab)^G \cap Ha^ib|=| b^G \cap Ha^ib|=|H|/2$  and $|$NSq$(G)\cap Ha^{2i}|=|$NSq$(G)\cap Ha^{2i+1}|=|H|/2$, for every integer $i$. Hence, if  $\beta\geq 1$, then $ b^G \cap Hb\subseteq S$ or $ (ab)^G \cap Hb\subseteq  S$, implying that $\beta\geq |H|/2$ and as  $|$NSq$(G)\cap Ha|=|H|/2$,  we get that $\beta\leq \mathcal{L}(H)= |H|/2$, yields   to  $\beta=|H|/2$.   Thus, $\beta\in \{0, |H|/2\}$.  

 Let $\Omega=\{a^k\ : \ \text{where $k$ is an  odd  integer not divided by $t$}\}\subseteq {\rm NSq}(G)$. Then, setting $S=S_{\alpha}$ (if $\alpha\geq 1$ and $\beta=0$) we have $H$ is an $(\alpha, 0 )$-regular set in ${\rm CayS}(G, S)$.   Let  $S=b^G\cup \Omega \cup S_{\alpha}$. Then, as $|Ha^i\cap S|=|Ha^i \cap \Omega |=|H|/2$, for $1\leq i\leq t-1$,   and $|Ha^ib\cap S|=|Ha^ib\cap b^G|=|H|/2$, we have  $H$ is an $(\alpha, |H|/2)$-regular set in ${\rm CayS}(G, S)$, where $0\leq \alpha\leq |H|/2$.  Remind  that,    $\alpha$ is even if $m$ is even. So,  we have Parts  (4) and (5) of the theorem. 
\end{proof}

{\bf Acknowledgement.} The authors gratefully thank  the referees  for the constructive comments and recommendations
which definitely help to improve the quality and readability  of the paper.

\end{document}